\numberwithin{equation}{section}
\theoremstyle{plain} 
\newtheorem{theorem}{Theorem}[section] 
\newtheorem{lemma}[theorem]{Lemma} 
\newtheorem{corollary}[theorem]{Corollary} 
\theoremstyle{definition} 
\newtheorem{remark}[theorem]{Remark}
\newcommand{\p}{{\mathbb P}}
\newcommand{\z}{{\mathbb Z}} 
\newcommand{\pj}{{{\mathbb P}^1}}
\newcommand{\pii}{{{\mathbb P}^2}}
\newcommand{\piii}{{{\mathbb P}^3}}
\newcommand{\sce}{\mathscr{E}}
\newcommand{\scf}{\mathscr{F}} 
\newcommand{\scg}{\mathscr{G}}
\newcommand{\sco}{\mathscr{O}} 
\newcommand{\sch}{\mathscr{H}}
\newcommand{\sci}{\mathscr{I}} 
\newcommand{\sck}{\mathscr{K}}
\newcommand{\scl}{\mathscr{L}}
\newcommand{\fm}{{\mathfrak m}}
\newcommand{\tH}{\text{H}} 
\newcommand{\h}{\text{h}}
\newcommand{\izo}{\overset{\sim}{\rightarrow}} 
\newcommand{\ra}{\rightarrow} 
\newcommand{\lra}{\longrightarrow} 
\newcommand{\xra}{\xrightarrow}  
\newcommand{\vb}{\, \vert \, } 
\newcommand{\prim}{{\, \prime}} 
\newcommand{\secund}{{\prime \prime}}
\newcommand{\Ker}{\text{Ker}\, }
\newcommand{\Cok}{\text{Coker}\, }
\newcommand{\e}{\varepsilon}
\begin{document}

\title[Vector bundles and unirationality]{On the vector bundles from Chang and 
       Ran's proof of the unirationality of $\mathcal{M}_g$, $g \leq 13$}

\author[C. Anghel]{Cristian Anghel}
\address{Institute of Mathematics ``Simion Stoilow'' of the Romanian Academy, 
         P.O. Box 1--764, 
         RO--014700, Bucharest, Romania}
\email{Cristian.Anghel@imar.ro}

\author[I. Coand\u{a}]{Iustin Coand\u{a}}
\email{Iustin.Coanda@imar.ro}

\author[N. Manolache]{Nicolae Manolache}
\email{Nicolae.Manolache@imar.ro}

\subjclass[2020]{Primary:14J60; Secondary: 14H10, 14H50}

\keywords{vector bundles on projective spaces, moduli spaces of curves, 
          unirationality, space curves}


\begin{abstract}
We combine the idea of Chang and Ran [Invent. Math. 76 (1984), 41--54] of 
using monads of vector bundles on the projective 3-space to prove the 
unirationality of the moduli spaces of curves of low genus with our 
classification of globally generated vector bundles with the first Chern 
class $c_1 = 5$ on the projective 3-space [arXiv:1805.11336] to get 
an alternative argument for the unirationality of the moduli spaces of curves 
of degree at most 13 (following the general guidelines of the method of Chang 
and Ran but with quite different effective details).    
\end{abstract}

\maketitle 
\tableofcontents 

\section*{Introduction} 

The problem of the unirationality of the moduli space $\mathcal{M}_g$ of 
curves of genus $g$ is a classical one$\, :$ see Verra \cite{v2} for a recent  
survey. We have to mention, however, the papers of Arbarello and 
Sernesi \cite{as} (who treated, in modern terms, the classical case 
$g \leq 10$), Sernesi \cite{se1} (who solved the case $g = 12$), 
Mori and Mukai \cite{mm} (who proved the uniruledness of $\mathcal{M}_{11}$), 
Chang and Ran \cite{cr} (who showed that $\mathcal{M}_{11}$, $\mathcal{M}_{13}$ 
(and $\mathcal{M}_{12}$) are unirational), and Verra \cite{v1} (who solved 
the case $g = 14$ and lower, using results of Mukai \cite{mu}). 
The subject came to our attention incidentally, as a consequence of our 
study \cite{acm1}, \cite{acm4} of globally generated vector bundles with 
small $c_1$ on projective spaces. The connection with the above mentioned 
problem stemmed from the circumstance that Chang and Ran \cite{cr} use a 
certain class of vector bundles with $c_1 = 5$ on $\piii$ to prove the 
unirationality of $\mathcal{M}_g$, for $g \leq 13$.       
More precisely, Chang and Ran \cite{cr} show that, for $g \leq 13$, there   
exists a family of nonsingular space curves of genus $g$ and degree 
$d = \lceil(3g+12)/4\rceil$ having general moduli and other ``good 
properties''. These curves, in case they exist, can be represented as the 
dependency locus of $r - 1$ global sections of certain vector 
bundles of rank $r = g - d + 4$ and, in turn, these vector bundles are the 
cohomology sheaves of some linear monads with terms depending only on $g$ and 
$d$. Finally, the good properties of the curves imply, quickly, that the space 
of these monads is rational. At this point, the vector bundles leave the stage 
and Chang and Ran concentrate on the existence of the above mentioned families  
of curves, which is the key and difficult point of their approach. In order to 
achieve this goal, the two authors use a method of Sernesi \cite{se2} which 
consists in successively attaching 4-secant conics to a curve of lower degree 
and genus, then showing that the resulting reducible curve has the necessary 
properties and, finally, smoothing that curve.       

The alternative approach we propose in this paper is to concentrate on the 
vector bundles instead of the curves.   
More precisely, we show that the cohomology sheaf of a general 
monad of the above type is a vector bundle $E$ with a number of good 
properties and that the dependency locus of $r - 1$ general global sections 
of $E$ is a nonsingular curve which has the properties required by the 
approach of Chang and Ran. The difficult point becomes, this time, the 
proof of the fact that $r - 1$ general global sections of $E$ are linearly  
dependent along a \emph{nonsingular} curve. We verify this by showing that $E$ 
is globally generated if $g \leq 12$, while for $g = 13$, where this fact is 
no longer true, we use a criterion of Martin-Deschamps and Perrin 
\cite{mdp}, recalled in Lemma~\ref{L:dependencyloci} from 
Appendix~\ref{A:monads}. Actually, if one wants to stick to vector bundles, the 
method of Chang and Ran works only in the range $8 \leq g \leq 13$. The 
approach with linear monads can be, however, extended to the range 
$5 \leq g \leq 7$, replacing the vector bundles by rank 2 reflexive sheaves. 
This approach is due to Chang~\cite[Cor.~4.8.1]{c}.

The paper is organized as follows$\, :$ we explain the method of Chang and Ran 
and our alternative approach in Section 1. 
We treat, then, in the next sections, the cases $8 \leq g \leq 11$, $g = 13$, 
and $g = 12$. We say, in the final Section 5, a few words about the cases 
$5 \leq g \leq 7$. 
We gather, in Appendix~\ref{A:monads}, some facts about monads and dependency 
loci while Appendix~\ref{A:auxiliary} contains a number of other auxiliary 
results.  

\vskip2mm 

\noindent 
{\bf Note.}\quad Our emphasis in this paper is on the usefulness of vector 
bundles on projective spaces in handling various geometric problems. The study 
of these bundles was an active area of research in the 1970s and 1980s but 
fall later into oblivion. One ignores, nowadays, many of the results and 
techniques developed during that period. 

\vskip2mm 

\noindent 
{\bf Notation.}\quad (i) We denote by $\p^n$ the projective $n$-space over an 
algebraically closed field $k$ of characteristic 0 and by $S \simeq 
k[x_0, \ldots , x_n]$ its projective coordinate ring.  

(ii) If $\scf$ is a coherent sheaf on $\p^n$ and $i \geq 0$ is an 
integer, we denote by $\tH^i_\ast(\scf)$ the graded $S$-module 
$\bigoplus_{l \in \z}\tH^i(\scf(l))$ and by $\h^i(\scf)$ the dimension of 
$\tH^i(\scf)$.  

(iii) If $E$ is a vector bundle (= locally free sheaf) on a variety $X$, we 
denote its dual $\sch om_{\sco_X}(E , \sco_X)$ by $E^\vee$. 

(iv) If $Y \subset X \subset \p^n$ are closed subschemes of $\p^n$, defined 
by ideal sheaves $\sci_X \subset \sci_Y \subset \sco_{\p^n}$, we denote by 
$\sci_{Y , X}$ the ideal sheaf of $Y$ as a subscheme of $X$, that is 
$\sci_{Y , X} = \sci_Y/\sci_X$. 

(v) If $D$ is a Cartier divisor on an equidimensional projective scheme $X$, 
we denote by $\sco_X[D]$ the associated invertible $\sco_X$-module. 



\section{The Method of Chang and Ran}\label{S:method} 

Let $g$ be an integer with $8 \leq g \leq 13$ and let $Y \subset \piii$ be a 
nonsingular, connected, nondegenerate (that is, not contained in a plane) 
space curve, of genus $g$ and (some) degree $d$. Let $H_{d,g}$ denote the 
open subset of the Hilbert scheme of subschemes of $\piii$ parametrizing 
nonsingular, connected space curves of degree $d$ and genus $g$ and let 
$[Y]$ be the point of $H_{d,g}$ corresponding to $Y$. 

\vskip2mm 

\noindent 
{\bf 1.1. The map from the Hilbert scheme to the moduli space.}\quad 
Consider the exact sequences$\, :$ 
\begin{gather*} 
0 \lra \text{T}_Y \lra \text{T}_\piii \vb Y \lra \text{N}_Y \lra 0\, ,\\  
0 \lra \sco_Y \lra \tH^0(\sco_\piii(1))^\vee \otimes_k \sco_Y(1) 
\lra \text{T}_\piii \vb Y \lra 0\, , 
\end{gather*} 
where $\text{N}_Y := \sch om_{\sco_\piii}(\sci_Y , \sco_Y)$ is the normal bundle 
of $Y$ in $\piii$ (and $\text{T}_Y$ is the tangent bundle of $Y$).  
Notice that the map $\tH^1(\sco_Y) \ra 
\tH^0(\sco_\piii(1))^\vee \otimes \tH^1(\sco_Y(1))$ is the dual of the 
multiplication map $\mu \colon \tH^0(\sco_\piii(1)) \otimes 
\tH^0(\omega_Y(-1)) \ra \tH^0(\omega_Y)$. It follows that if $\mu$ is 
injective then $\tH^1(\text{T}_\piii \vb Y) = 0$ which implies that 
$\tH^1(\text{N}_Y) = 0$ (hence $H_{d,g}$ is nonsingular, of (local) dimension 
$\h^0(\text{N}_Y) = \chi(\text{N}_Y) = 4d$, at $[Y]$) and that the connecting 
morphism $\partial \colon \tH^0(\text{N}_Y) \ra \tH^1(\text{T}_Y)$ is 
surjective. But $\tH^0(\text{N}_Y)$ is the tangent space of $H_{d,g}$ at $[Y]$ 
and $\partial$ is the Kodaira-Spencer map of the universal family of curves 
over $H_{d,g}$ at $[Y]$. One deduces that the restriction of the natural map 
$H_{d,g} \ra \mathcal{M}_g$ to a neighbourhood of $[Y]$ is dominant. 

In order to use the intrinsic geometry of $Y$, one assumes that $Y$ is 
\emph{linearly normal} (that is, $\tH^0(\sco_\piii(1)) \izo \tH^0(\sco_Y(1))$ 
or, equivalently, $\tH^1(\sci_Y(1)) = 0$). In this case, $\mu$ is 
injective if and only if the multiplication map $\mu_0(Y) \colon 
\tH^0(\sco_Y(1)) \otimes \tH^0(\omega_Y(-1)) \ra \tH^0(\omega_Y)$ \emph{is 
injective}. Notice that $\h^1(\omega_Y(-1)) = \h^0(\sco_Y(1)) = 4$ hence, 
by Riemann-Roch on $Y$, $\h^0(\omega_Y(-1)) = g - d + 3$ and the injectivity 
of $\mu_0(Y)$ implies that the Brill-Noether number $\rho(g,3,d) = g - 
4(g - d + 3) = 4d - 3g - 12$ is non-negative. One assumes that $d$ is the 
least integer satisfying this condition.       

\vskip2mm

\noindent 
{\bf 1.2. Space curves and vector bundles with {\boldmath{$c_1 = 5$}}.}\quad 
If $\omega_Y(-1)$ is globally generated then, putting $r := 1 + 
\h^0(\omega_Y(-1))$, any epimorphism $\delta \colon (r - 1)\sco_\piii \ra 
\omega_Y(-1)$ defined by a $k$-basis of $\tH^0(\omega_Y(-1))$ determines, 
according to Serre's method of extensions, an extension$\, :$ 
\[
0 \lra (r - 1)\sco_\piii \lra E \lra \sci_Y(5) \lra 0\, , 
\]    
with $E$ locally free of rank $r$, such that, dualizing the extension, one 
gets the exact sequence$\, :$ 
\[
0 \lra \sco_\piii(-5) \lra E^\vee \lra (r - 1)\sco_\piii 
\overset{\delta}{\lra} \omega_Y(-1) \lra 0\, . 
\]
One deduces, from the last exact sequence, that $\tH^i(E^\vee) = 0$, $i = 0,\, 
1$. The Chern classes of $E$ are $c_1 = 5$, $c_2 = d$, $c_3 = \text{deg}\, 
\omega_Y(-1) = 2g - 2 - d$. $Y$ is linearly normal if and only if 
$\tH^1(E(-4)) = 0$ (which, by Serre duality, is equivalent to $\tH^2(E^\vee) 
= 0$) and, in this case, $E$ has rank $r = g - d + 4$. Under these 
assumptions, $\mu_0(Y)$ is injective if and only if $\tH^0(E^\vee(1)) = 0$.  

Conversely, let $E$ be a vector bundle of rank $r$, with  
$c_1 = 5$, and such that $\tH^i(E^\vee) = 0$, $i = 0,\, 1$. Let $W$ be an  
$(r-1)$-dimensional vector subspace of $\tH^0(E)$ such that the degeneracy 
locus of the evaluation morphism $W \otimes_k \sco_\piii \ra E$ is a 
nonsingular curve $Y$. Then the Eagon-Northcott complex of this morphism 
provides an exact sequence$\, :$ 
\[
0 \lra W \otimes_k \sco_\piii \ra E \lra \sci_Y(5) \lra 0\, , 
\] 
which, by dualization, defines an epimorphism $\delta \colon W^\vee \otimes_k 
\sco_\piii \ra \omega_Y(-1)$ with $\tH^0(\delta)$ bijective. One gets, in 
this case, a family of nonsingular space curves parametrized by an open 
subset of the Grassmannian ${\mathbb G}_{r-1}(\tH^0(E))$ of 
$(r-1)$-dimensional vector subspaces of $\tH^0(E)$. 

Now, if $E$ varies in a family with irreducible, rational base, then, in 
order to get a family of nonsingular space curves with irreducible, rational 
base, one has to assume that $\h^0(E)$ is constant in the family of vector 
bundles and that the evaluation morphisms $\tH^0(E) \otimes_k \sco_\piii \ra E$ 
can be patched together to a morphism of vector bundles on the total space of 
the family. This can be accomplished by assuming that $\tH^1(E) = 0$, for 
any bundle $E$ in the family, by Grothendieck's cohomology and base change 
theorem (see, for example, Tengan \cite[Cor.~3.1]{te}). 
Notice, also, that 
$\tH^2(E) \simeq \tH^2(\sci_Y(5)) \simeq \tH^1(\sco_Y(5))$, that  
$\tH^1(\sco_Y(5)) = 0$ if $5d \geq 2g  - 1$, 
and that $\tH^3(E) \simeq \tH^0(E^\vee(-4))^\vee = 0$ hence, by Riemann-Roch, 
$\h^0(E) = 2g - 6d + 58$ if $\tH^1(E) = 0$. Finally, the condition 
$\tH^1(E) = 0$ is equivalent to $\tH^1(\sci_Y(5)) = 0$. 

\vskip2mm 

\noindent 
{\bf 1.3. Linear monads.}\quad 
Let $E$ be a vector bundle of rank $r = g - d + 4$ and Chern classes 
$c_1 = 5$, $c_2 = d$, $c_3 = 2g - 2 - d$. Applying Cor.~\ref{C:linearmonad} 
(and Remark~\ref{R:linearmonad}) from Appendix~\ref{A:monads} to the 
vector bundle $F := E(-2)$ (as a matter of notation, this $F$ is what Chang 
and Ran denote by $E$) one gets that $E(-2)$ is the cohomology sheaf of 
a monad of the form$\, :$ 
\[
0 \lra \rho \sco_\piii(-1) \overset{\beta}{\lra}  \sigma \sco_\piii 
\overset{\alpha}{\lra} \tau \sco_\piii(1) \lra 0  
\]   
if and only if $\tH^0(E(-3)) = 0$, $\tH^0(E^\vee(1)) = 0$, $\tH^1(E(-4)) = 0$ 
and $\tH^1(E^\vee) = 0$. Actually, the last condition is, in our case, a 
consequence of the first three because $\chi(E^\vee) = 0$ (one can use the 
convenient form of Riemann-Roch stated in Thm.~2.3 from \cite{ha}) and the 
first three conditions imply that $\tH^i(E^\vee) = 0$ for $i \neq 1$.   
According to the last part of Cor.~\ref{C:linearmonad}, one must have  
$\tau = - \chi(E(-3)) = \chi(E^\vee(-1)) = 2d - g - 9$, 
$\rho = -\chi(E^\vee(1)) = 4d - 3g - 12$ and $\sigma = r - \tau - \rho = 
5d - 3g - 17$. If $Y$ is a nonsigular curve that can be 
described as the dependency locus of $r - 1$ global sections of $E$ then the 
above conditions on $E$ are equivalent to$\, :$ $\tH^0(\sci_Y(2)) = 0$, 
$\mu_0(Y)$ injective and $\tH^1(\sci_Y(1)) = 0$ (that is, $Y$ linearly 
normal).   

It is easy to see that the monads of the above form with the property that  
$\tH^0(\beta^\vee(1)) \colon \tH^0(\sigma \sco_\piii(1)) \ra 
\tH^0(\rho \sco_\piii(2))$ is surjective can be put together into a family with 
irreducible rational base. If $E(-2)$ is the cohomology sheaf of a monad of the 
above form, then $\tH^0(\beta^\vee(1))$ is surjective if and only if 
$\tH^1(E^\vee(3)) = 0$. In terms of the curve $Y$, the last condition is 
equivalent to the fact that the multiplication map $\tH^0(\omega_Y(-1)) \otimes 
\tH^0(\sco_\piii(3)) \ra \tH^0(\omega_Y(2))$ is surjective. Notice that this 
condition implies that $\omega_Y(-1)$ is globally generated. Notice also that, 
by Lemma~\ref{L:5ora2o(1)} from Appendix~\ref{A:monads}, the condition 
$\tH^0(\beta^\vee(1))$ surjective is automatically satisfied if $\rho \leq 2$. 

\vskip2mm

\noindent
{\bf 1.4. The approach of Chang and Ran.}\quad 
Taking into account what has been said, in order to show that 
the moduli space $\mathcal{M}_g$ is unirational it suffices to prove the 
existence of nonsingular, connected space curves $Y \subset \piii$, of genus 
$g$ and degree $d$, with the folowing properties$\, :$   
\begin{enumerate} 
\item[(L)] $Y$ \emph{is linearly normal}$\, ;$ 
\item[(M)] $\mu_0(Y)$ \emph{is injective}$\, ;$ 
\item[(R)] $\tH^0(\sci_Y(2)) = 0$ \emph{and} $\tH^1(\sci_Y(5)) = 0$$\, ;$ 
\item[(S)] \emph{The multiplication map} $\tH^0(\omega_Y(-1)) \otimes 
\tH^0(\sco_\piii(3)) \ra \tH^0(\omega_Y(2))$ \emph{is surjective}. 
\end{enumerate}
Actually, Chang and Ran use, instead of (R), the stronger condition asserting 
that "$Y$ \emph{has maximal rank}" (which, in the cases $10 \leq g \leq 13$, 
means that $\tH^0(\sci_Y(4)) = 0$ and $\tH^1(\sci_Y(5)) = 0$) although they are 
aware of the fact that the above weaker condition is sufficient (see 
\cite[Remark~3.1]{cr}). They prove the above existence result by starting 
with a curve of smaller degree and genus, having some "good properties", 
and successively attaching 4-secant conics to it. They show that, if one is 
careful enough, the resulting reducible curve has the above properties and 
can be smoothed. This approach is based on results of Sernesi~\cite{se2} and, 
in particular, on his results showing the injectivity of the $\mu_0$-map 
of a reducible curve obtained by attaching 4-secant conics. 

\vskip2mm 

\noindent 
{\bf 1.5. An alternative approach.}\quad  
The first three subsections above show that the unirationality of 
$\mathcal{M}_g$, $8 \leq g \leq 13$, is a consequence of the following$\, :$  

\begin{theorem}\label{T:main} 
Let $g$ be an integer with $8 \leq g \leq 13$, and let $d$ be the least 
integer for which $\rho := 4d - 3g -12 \geq 0$. Consider, also, the integers 
$\sigma := 5d - 3g - 17$ and $\tau := 2d - g - 9$. Then there exist vector 
bundles $E$ on $\piii$, of rank $r := g - d + 4$, subject to the following 
conditions$\, :$ 

\emph{(a)} $F := E(-2)$ is the cohomology sheaf of a monad of the form$\, :$ 
\begin{equation}\label{E:monade(-2)} 
0 \lra \rho \sco_\piii(-1) \overset{\beta}{\lra}  \sigma \sco_\piii 
\overset{\alpha}{\lra} \tau \sco_\piii(1) \lra 0\, , 
\end{equation} 
with ${\fam0 H}^0(\beta^\vee(1))$ surjective$\, ;$ 

\emph{(b)} ${\fam0 H}^1(E) = 0$$\, ;$ 

\emph{(c)} The dependency scheme of $r - 1$ general global sections of $E$ is 
a nonsingular curve.     
\end{theorem} 

In order to prove the theorem, we consider bundles $E$ with the property that 
their duals $E^\vee$ can be realized as extensions$\, :$ 
\begin{equation}\label{E:aeveescic}
0 \lra A \lra E^\vee \lra \sci_C \lra 0\, , 
\end{equation} 
where, for $8 \leq g \leq 11$ (when $d = g + 1$ and $r = 3$), $A = 
\sco_\piii(-3) \oplus \sco_\piii(-2)$ and $C$ is a rational curve of degree 
$g - 5$, while, for $g = 12,\, 13$ (when $d = g$ and $r = 4$), $A = 
\sco_\piii(-3) \oplus 2\sco_\piii(-1)$ and $C$ is a rational curve of degree 
$g - 7$. These bundles occured naturally in our systematic study of 
globally generated vector bundles with $c_1 = 5$ on $\piii$ from \cite{acm4}.   

The extensions \eqref{E:aeveescic} can be constructed by elementary 
transformations as follows$\, :$ assume that $A = 
\bigoplus_{i=0}^m\sco_\piii(-a_i)$, $a_0 \geq \ldots \geq a_m$, and that $C$ is a 
nonsingular connected curve on a nonsingular surface $X \subset \piii$ of 
degree $d$. Let $\gamma$ be a global section of $\sco_X[C]$ whose zero divisor 
is $C$ and let $\delta_i$ be a global section of $\sco_X(d-a_i) \otimes 
\sco_X[C]$, with zero divisor $D_i$, $i = 0, \ldots , m$, such that $C \cap 
D_0 \cap \ldots \cap D_m = \emptyset$. Let $\e \colon \sco_\piii(d) \oplus 
A^\vee \ra \sco_X(d) \otimes \sco_X[C]$ be the epimorphism defined by $\gamma, 
\delta_0 , \ldots , \delta_m$ and let $E$ be the kernel of $\e$. 

Recall the following simple observation$\, :$ if $\phi \colon \scf_0 \oplus 
\scf_1 \ra \scg$ is a morphism of sheaves, with components $\phi_i \colon 
\scf_i \ra \scg$, $i = 0,\, 1$, then one has an exact sequence$\, :$ 
\[
0 \lra \Ker \phi_0 \lra \Ker \phi \lra \scf_1 
\overset{{\overline \phi}_1}{\lra} \Cok \phi_0 \lra \Cok \phi \lra 0\, ,  
\]   
where ${\overline \phi}_1$ is the composite morphism $\scf_1  
\overset{\phi_1}{\lra} \scg \ra \Cok \phi_0$. Using this observation and the 
adjunction formula one gets that $(\sco_X(d) \otimes \sco_X[C]) \vb C \simeq 
\omega_C(4)$ and an exact sequence$\, :$ 
\[
0 \lra \sco_\piii \lra E \lra A^\vee \overset{{\overline \delta}}{\lra} 
\omega_C(4) \lra 0\, , 
\] 
with $\overline \delta$ defined by $\delta_i \vb C$, $i = 0, \ldots , m$. 
Let $\widetilde \sck$ be the kernel of $\overline \delta$. Dualizing the 
exact sequence$\, :$ 
\begin{equation}\label{E:oescktilde} 
0 \lra \sco_\piii \lra E \lra {\widetilde \sck} \lra 0\, , 
\end{equation}
one gets the extension \eqref{E:aeveescic} above. 

We shall use two filtrations of $\widetilde \sck$. Firstly, applying the above 
observation, one gets an exact sequence$\, :$ 
\begin{equation}\label{E:ica1scktilde}  
0 \ra \sci_C(a_0) \ra {\widetilde \sck} \ra 
{\textstyle \bigoplus}_{i=1}^m\sco_\piii(a_i) \xra{{\overline \delta}^\prime} 
(\sco_X(d) \otimes \sco_X[C]) \vb D_0 \cap C \ra 0\, ,  
\end{equation}
where ${\overline \delta}^\prime$ is defined by $\delta_i \vb D_0 \cap C$, 
$i = 1 , \ldots , m$.  

Secondly, let $\sck$ be the kernel of the morphism $\delta \colon A^\vee \ra 
\sco_X(d) \otimes \sco_X[C]$ defined by $\delta_0, \ldots , \delta_m$. Applying 
the Snake Lemma to the diagram$\, :$ 
\[
\SelectTips{cm}{12}\xymatrix{0\ar[r] & 0\ar[r]\ar[d] & 
{A^\vee}\ar @{=}[r]\ar[d]^{\delta} & {A^\vee}\ar[r]\ar[d]^{{\overline \delta}} 
& 0\\ 
0\ar[r] & {\sco_X(d)}\ar[r]^-{\gamma} & {\sco_X(d) \otimes \sco_X[C]}\ar[r] & 
{\omega_C(4)}\ar[r] & 0}
\]
one gets an exact sequence$\, :$ 
\begin{equation}\label{E:sckscktilde} 
0 \lra \sck \lra {\widetilde \sck} \lra \sci_{D_0 \cap \ldots \cap D_m , X}(d) 
\lra 0\, . 
\end{equation} 
Notice that if $D_i = \Delta_i + B$, $i = 0 , \ldots , m$, such that the 
scheme $W := \Delta _0 \cap \ldots \cap \Delta_m$ is 0-dimensional (or empty), 
then $\sci_{D_0 \cap \ldots \cap D_m , X} \simeq \sci_{W , X} \otimes \sco_X[-B]$.  
On the other hand, applying the above observation to $\delta$ one obtains an 
exact sequence$\, :$ 
\begin{equation}\label{E:scoscksckprime}  
0 \ra \sco_\piii(a_0 - d) \ra \sck \ra 
{\textstyle \bigoplus}_{i=1}^m\sco_\piii(a_i) \overset{\delta^\prime}{\lra} 
(\sco_X(d) \otimes \sco_X[C]) \vb D_0\, , 
\end{equation}
where $\delta^\prime$ is defined by $\delta_i \vb D_0$, $i = 1 , \ldots , m$. 
Notice that if $D_i = \Delta_i + B$, $i = 0 , \ldots , m$, such that $\Delta_0 
\cap B$ consists of finitely many points then the last exact sequence induces 
an exact sequence$\, :$ 
\begin{equation}\label{E:scoscksckprimebis}  
0 \ra \sco_\piii(a_0 - d) \ra \sck \ra 
{\textstyle \bigoplus}_{i=1}^m\sco_\piii(a_i) \overset{\delta^\secund}{\lra} 
(\sco_X(d) \otimes \sco_X[C]) \vb \Delta_0\, , 
\end{equation}
where $\delta^\secund$ is defined by $\delta_i \vb \Delta_0$, $i = 1 , \ldots , 
m$. The reason is that $\sco_{D_0}$ embeds into $\sco_{\Delta_0} \oplus \sco_B$ 
and $\delta_i$ vanishes on $B$, $i = 1 , \ldots , m$.  

For our purposes, if $C$ is a rational curve of degree 6 (resp., 5) we take 
$X$ to be a cubic surface which is the blow-up $\pi \colon X \ra \pii$ of 
$\pii$ in six general points $P_1 , \ldots , P_6$, embedded in $\piii$ such 
that $\sco_X(1) \simeq \pi^\ast\sco_\pj(3) \otimes \sco_X[-E_1 - \ldots -E_6]$, 
where $E_i := \pi^{-1}(P_i)$, and we take $C$ to be the strict transform of a 
nonsingular conic $\overline{C} \subset \pii$ containing none of the points 
$P_1 , \ldots , P_6$ (resp., containing $P_1$ but none of the points $P_2 , 
\ldots , P_6$), while if $C$ is a rational curve of degree 4 or 3, we take 
$X$ to be a quadric surface containing $C$. The divisors $D_0 , \ldots , D_m$ 
will be specified during the proof.   

\vskip2mm 

Now, using the above construction, the properties (a) and (b) from the 
conclusion of the theorem can be easily checked. Actually, for $g = 12,\, 13$, 
there is a slight technical complication due to the fact that the bundles 
$E$ constructed as above have the property that $E(-2)$ is the cohomology 
sheaf of a minimal monad of the form$\, :$ 
\begin{equation}\label{E:monade(-2)1} 
0 \lra (\rho + 2) \sco_\piii(-1) \overset{\beta^\prim}{\lra}  
\sigma \sco_\piii \oplus 2\sco_\piii(-1) 
\overset{\alpha^\prim}{\lra} \tau \sco_\piii(1) \lra 0\, , 
\end{equation} 
with $\tH^0(\beta^{\prim \vee}(1))$ surjective. This is, however, harmless  
because the monads of the form \eqref{E:monade(-2)1} with 
$\tH^0(\beta^{\prim \vee}(1))$  surjective can be put together into a family 
with irreducible base. For a general monad of this type, the component 
$(\rho + 2)\sco_\piii(-1) \ra 2\sco_\piii(-1)$ of $\beta^\prim$ is surjective 
hence the cohomology sheaf of the monad is isomorphic to the cohomology 
sheaf of a monad of the form \eqref{E:monade(-2)} with $\tH^0(\beta^\vee(1))$ 
surjective. Moreover, the conditions (b) and ``(b) + (c)'' are open 
conditions in the family of vector bundles $E$ with the property that $E(-2)$ 
is the cohomology sheaf of a monad of the form \eqref{E:monade(-2)1} (because, 
in this case, $\tH^i(E) = 0$ for $i \geq 2$). 

\vskip2mm 

The non-trivial part of the proof is the verification of condition (c). 
We verify, for $g \leq 12$, the stronger condition ``$E$ \emph{is globally 
generated}''. Actually, $E$ is 0-regular for $8 \leq g \leq 10$, while, for 
$g = 11,\, 12$, $E$ is 1-regular (this follows from (a) and (b)) and we show 
that ``\emph{the multiplication map} $\tH^0(E) \otimes S_1 \ra \tH^0(E(1))$ 
\emph{is surjective}''. Here $S_1 := \tH^0(\sco_\piii(1))$ is the space of 
linear forms on $\piii$. 

\vskip2mm 

On the other hand, if $g = 13$ then $d = 13$, $E$ has rank 4 and, if it 
satisfies (a) and (b), one has $\h^0(E) = 6$ hence the degeneracy locus of the 
evaluation morphism $6\sco_\piii \ra E$ is non-empty. The best one can hope for 
in this case is that ``\emph{the evaluation morphism} $\tH^0(E) \otimes_k 
\sco_\piii \ra E$ \emph{is an epimorphism except at finitely many points where 
it has corank} 1'' (this would, obviously, imply (c)). We were not able to 
verify this condition for the bundles $E$ constructed as above. We can, 
fortunately, show that some of the bundles $E$ constructed as above satisfy the 
weaker condition asserting that ``\emph{the evaluation morphism of} $E$ 
\emph{has corank at most} 1 \emph{at every point and its degeneracy scheme is 
a curve contained in a nonsingular surface in} $\piii$''. 
According to some results of Martin-Deschamps and Perrin \cite{mdp}, recalled 
in Lemma~\ref{L:dependencyloci} from Appendix~\ref{A:monads}, this condition 
still implies (c).  



\section{The Cases $8 \leq g  \leq 11$}\label{S:g=11}

In these cases, $d = g + 1$, $r = 3$ and the monads from 
Theorem~\ref{T:main}(a) have the form$\, :$ 
\begin{equation}\label{E:monadgleq11}  
0 \lra (g - 8)\sco_\piii(-1) \overset{\beta}{\lra} 2(g - 6)\sco_\piii 
\overset{\alpha}{\lra} (g - 7)\sco_\piii(1) \lra 0\, . 
\end{equation}
Using the notation from Subsection~1.5, one has $A = 
\sco_\piii(-3) \oplus \sco_\piii(-2)$ and $C \subset \piii$ is a nonsingular 
rational curve of degree $g - 5$, contained in a nonsingular surface $X 
\subset \piii$ of degree 3 or 2. We shall specify, now, the divisors $D_0$ 
and $D_1$. 

\vskip2mm 

\noindent 
$\bullet$\quad If $g = 11$ then $C$ has degree 6. $X$ is the unique effective 
divisor of degree 3 in $\piii$ containing $C$ because $C$ admits six 
4-secants (namely, the strict transforms of the conics in $\pii$ containing 
five of the six points $P_1 , \ldots , P_6$) and a complete intersection 
of type $(3 , 3)$ in $\piii$ containing $C$ would contain all of these 
4-secants. Since $\h^0(\sci_C(3)) = 1$ and $\h^0(\sco_C(3)) = 19$ it follows 
that $\tH^1(\sci_C(3)) = 0$. Consider a general (nonsingular) conic 
${\overline C}_0 \subset \pii$, containing $P_1,\, P_2,\, P_3$ but none of 
the points $P_4,\, P_5,\, P_6$, and intersecting $\overline C$ (the conic  
in $\pii$ whose inverse image on $X$ is $C$) in four distinct points. The 
strict transform $C_0 \subset X$ of ${\overline C}_0$  is a twisted cubic 
curve in $\piii$. One has $C_0 \sim C - E_1 - E_2 - E_3$ as divisors on $X$. 
Put $Y := E_1 + E_2 + E_3$. By the adjunction formula, $\sco_X[C_0] \vb C_0 
\simeq \omega_{C_0}(1)$. Moreover, the restriction map $\tH^0(\sco_X(1) \otimes 
\sco_X[C_0]) \ra \tH^0((\sco_X(1) \otimes \sco_X[C_0]) \vb C_0)$ is surjective 
because its cokernel embeds into $\tH^1(\sco_X(1)) = 0$. It follows that if 
$D$ is a general member of the complete linear system $\vert \, \sco_X(1) 
\otimes \sco_X[C_0] \, \vert$ then the scheme $W := D \cap C_0$ consists of 
four \emph{general} simple points of $C_0$, none of them belonging to $C$ or 
to $Y$. We take $D_0 := C_0 + Y$ and $D_1 := D + Y$. 

\vskip2mm 

\noindent 
$\bullet$\quad If $g = 10$ then $C$ has degree 5. Since $\tH^0(\sci_C(2)) = 0$ 
(because, otherwise, $C$ would be linked by a complete intersection of type 
$(2 , 3)$ to a line and this would contradict the fact that $C$ is not 
arithmetically Cohen-Macaulay) it follows that $\h^1(\sci_C(2)) = 1$. If 
$H \subset \piii$ is a general plane then $H \cap C$ consists of five points, 
no three collinear. In this case, $\sci_{H \cap C , H}$ is 3-regular. The 
Lemma of Le Potier (see, for example, \cite[Lemma~1.22]{acm1}) implies that 
$\tH^1(\sci_C(3)) = 0$. Consider a general (nonsingular) conic 
${\overline C}_0 \subset \pii$, containing $P_1,\, P_2,\, P_3$ but none of 
the points $P_4,\, P_5,\, P_6$, and intersecting $\overline C$ (the conic  
in $\pii$ whose strict transform on $X$ is $C$) in four distinct points ($P_1$ 
being one of them). The strict transform $C_0 \subset X$ of ${\overline C}_0$  
is a twisted cubic curve in $\piii$. One has $C_0 \sim C - E_2 - E_3$ as 
divisors on $X$ and $C_0 \cap C$ consists of three simple points.  
Let $D$ be a general member of the complete linear system $\vert \, 
\sco_X(1) \otimes \sco_X[C_0] \, \vert$ such that the scheme $W := D \cap C_0$ 
consists of four \emph{general} simple points of $C_0$, none of them belonging 
to $C$ or to $E_2 \cup E_3$. We take $D_0 := C_0 + E_2 + E_3$ and $D_1 := D + 
E_2 + E_3$. 

\vskip2mm 

\noindent 
$\bullet$\quad If $g = 9$ (resp., $g = 8$) then $C$ is a divisor of type 
$(3 , 1)$ (resp., $(2 , 1)$) on a nonsingular quadric surface $X \simeq 
\pj \times \pj$ in $\piii$. In both cases, $\tH^1(\sci_C(2)) = 0$. Consider 
two lines $L_0$, $L_1$ of type $(1 , 0)$ on $X$. We take $D_0 := L_0 + L_1$ 
(resp., $D_0 = L_0$) and $D_1$ any member of the complete linear system 
$\vert \, \sco_X[C] \, \vert$ containing none of the points of $C \cap D_0$. 

\vskip2mm 

Now, in all of the above cases, $D_0 \cap C$ consists of $g - 7$ simple 
points, no three collinear and no four coplanar (recall that, for $g = 11$ 
and $g = 10$, they belong to the twisted cubic curve $C_0 \subset \piii$). 
Using the exact sequence \eqref{E:oescktilde} and the exact sequence 
\eqref{E:ica1scktilde} that becomes$\, :$ 
\[
0 \lra \sci_C(3) \lra {\widetilde \sck} \lra \sci_{C_0 \cap C}(2) \lra 0\, , 
\] 
one deduces that $\tH^0(E(-2)) = 0$ (and $\tH^0(E(-1)) = 0$ if $g = 11$) and 
that $\h^1(E(-3)) = \h^1(\sci_{D_0 \cap C}(-1)) = g - 7$. On the other hand, 
using the extension \eqref{E:aeveescic}, one gets that $\tH^0(E^\vee(1)) = 0$, 
$\tH^1(E(-4)) \simeq \tH^2(E^\vee)^\vee = 0$ and $\tH^1(E^\vee) = 0$. Moreover, 
$\h^1(E^\vee(1)) = \h^1(\sci_C(1)) = g - 8$. Cor.~\ref{C:linearmonad} from 
Appendix~\ref{A:monads} implies, now, that $F := E(-2)$ is the cohomology 
sheaf of a 
monad of the form \eqref{E:monadgleq11}. Notice, also, that $\tH^1(E^\vee(3)) 
\simeq \tH^1(\sci_C(3)) = 0$, hence $\tH^0(\beta^\vee(1))$ is surjective 
($\beta$ being the differential of the monad). Consequently, $E$  
satisfies condition (a) from Theorem~\ref{T:main}.    
Using, again, the fact that $\tH^1(\sci_C(3)) = 0$ and the exact sequences 
\eqref{E:oescktilde} and \eqref{E:ica1scktilde}, one gets that $\tH^1(E) = 0$ 
hence $E$ satisfies condition (b) from Theorem~\ref{T:main}. Moreover, for 
$g = 8,\, 9$, one has $\tH^1(E(-1)) = 0$ hence $E$ is 0-regular in those cases 
(actually, for $g = 8$ there is only one bundle $E$ satisfying the conclusion 
of Theorem~\ref{T:main}, namely $E = \Omega_\piii(3)$$\, :$ look at the monad 
\eqref{E:monadgleq11}). It, consequently, remains to show that the 
multiplication map $\tH^0(E) \otimes S_1 \ra \tH^0(E(1))$ is surjective if 
$g = 11$ and that $\tH^1(E(-1)) = 0$ if $g = 10$.  

\vskip3mm 

\noindent 
{\bf Conclusion of the proof of Theorem~\ref{T:main} for 
{\boldmath{$g = 11$}}.}\quad 
For the above choice of the divisors $D_0$ and $D_1$, the exact sequences 
\eqref{E:sckscktilde} and \eqref{E:scoscksckprimebis} become$\, :$ 
\begin{gather*} 
0 \lra \sck \lra {\widetilde \sck} \lra \sci_{Y \cup W , X}(3) \lra 0\, ,\\ 
0 \lra \sco_\piii \lra \sck \lra \sci_{C_0}(2) \lra 0\, .  
\end{gather*}
It follows that, in order to show that 
the multiplication map $\tH^0(E) \otimes S_1 \ra \tH^0(E(1))$ is surjective, it 
suffices to check that $E_1,\, E_2,\, E_3$ and $W$ satisfy the 
hypotheses of Lemma~\ref{L:3lines+4points} from Appendix~\ref{A:auxiliary} 
and that is exactly what we are going to do next. 

Since $W$ consists of four simple points on the twisted cubic curve $C_0$ 
it is not contained in a plane. We assert that, for $1 \leq l \leq 3$, 
$(Y \setminus E_l) \cup C_0$ is not contained in a quadric surface. 
\emph{Indeed}, if it would be contained then the surface would be nonsingular, 
isomorphic to $\pj \times \pj$, $C_0$ would be a divisor of type $(2 , 1)$ on 
this surface and the two components of $Y \setminus E_l$ would be divisors 
of type $(1 , 0)$ (because the intersection of each of them with $C_0$ is a 
simple point). But this would \emph{contradict} the fact that 
$(Y \setminus E_l) \cup C_0$ is contained in an irreducible cubic surface. 

It follows that the restriction map $\tH^0(\sci_{Y \setminus E_l}(2)) \ra 
\tH^0(\sco_{C_0}(2))$ is injective, $l = 1,\, 2,\, 3$. Since $W$ consists 
of four \emph{general} points of $C_0$, one can, consequently, assume that 
$\tH^0(\sci_{(Y \setminus E_l) \cup W}(2)) = 0$, $l = 1,\, 2,\, 3$. Moreover, 
one can assume that none of the points of $W$ belongs to the quadric 
surface containing $Y$. This completes the verification of the hypotheses of 
Lemma~\ref{L:3lines+4points} and, with it, the proof of the assertion that the 
multiplication map $\tH^0(E) \otimes S_1 \ra \tH^0(E(1))$ is 
surjective.\hfill $\Box$  


\vskip3mm 
 
\noindent 
{\bf Conclusion of the proof of Theorem~\ref{T:main} for 
{\boldmath{$g = 10$}}.}\quad 
For the above choice of the divisors $D_0$ and $D_1$, the exact sequences 
\eqref{E:sckscktilde} and \eqref{E:scoscksckprimebis} become$\, :$ 
\begin{gather*} 
0 \lra \sck \lra {\widetilde \sck} \lra \sci_{E_2 \cup E_3 \cup W , X}(3) \lra 
0\, ,\\ 
0 \lra \sco_\piii \lra \sck \lra \sci_{C_0}(2) \lra 0\, . 
\end{gather*}
As we saw in the above proof of the case $g = 11$ of Theorem~\ref{T:main},    
one can assume that $E_2 \cup E_3 \cup W$ is contained in no quadric surface 
in $\piii$. This implies that $\tH^1(\sci_{E_2 \cup E_3 \cup W}(2)) = 0$ (because 
$\h^0(\sco_{E_2 \cup E_3 \cup W}(2)) = 10$) hence $\tH^1(E(-1)) = 0$.\hfill $\Box$  


\section{The Case $g = 13$}\label{S:g=13} 

\noindent 
{\bf Proof of Theorem~\ref{T:main} for {\boldmath{$g = 13$}}.}\quad 
In this case, $d = 13$, $r = 4$, and the monad from the statement of 
Theorem~\ref{T:main}(a) has the form$\, :$ 
\begin{equation}\label{E:monadg=13}
0 \lra \sco_\piii(-1) \overset{\beta}{\lra} 9\sco_\piii 
\overset{\alpha}{\lra} 4\sco_\piii(1) \lra 0\, . 
\end{equation} 
Using the notation from Subsection~1.5, one has $A = 
\sco_\piii(-3) \oplus 2\sco_\piii(-1)$ and $C$ is a nonsingular 
rational curve of degree 6 contained in a nonsingular cubic surface $X$ in 
$\piii$. We shall specify, now, the divisors $D_0$, $D_1$, $D_2$. Let $C_0 
\subset X$ be a twisted cubic curve which is the strict transform of a general 
(nonsingular) conic ${\overline C}_0 \subset \pii$, containing $P_1,\, P_2,\, 
P_3$ but none of the points $P_4,\, P_5,\, P_6$, and intersecting 
$\overline C$ (the conic in $\pii$ whose inverse image on $X$ is $C$) in four 
distinct points. One has $C_0 \sim C - Y$, where $Y := E_1 + E_2 + E_3$. 
Let $\gamma_0$ be a global section of $\sco_X[C_0]$ whose zero divisor is 
$C_0$. Complete $\gamma_0$ to a $k$-basis $\gamma_0,\, \gamma_1,\, \gamma_2$ 
of $\tH^0(\sco_X[C_0])$ and let $C_i \subset X$ be the zero divisor of 
$\gamma_i$, $i = 1,\, 2$. 
Let, finally, $D$ be a general member of the linear 
system $\vert \, \sco_X(2) \, \vert$ such that $W := D \cap C_0$ consists of 
six \emph{general} simple points of $C_0$, not belonging to $C \cup Y$. We 
take $D_0 := C_0 + Y$, $D_1 := D + C_1 + Y$, $D_2 := D + C_2 + Y$. 

\vskip2mm 

\noindent 
{\bf Claim 1.}\quad $F := E(-2)$ \emph{is the cohomology sheaf of a monad of 
the form}$\, :$ 
\[
0 \lra 3\sco_\piii(-1) \overset{\beta^\prim}{\lra} 9\sco_\piii \oplus 
2\sco_\piii(-1) \overset{\alpha^\prim}{\lra} 4\sco_\piii(1) \lra 0\, ,  
\] 
\emph{with} $\tH^0(\beta^{\prim \vee}(1))$ \emph{surjective}. 

\vskip2mm 

\noindent 
\emph{Indeed}, the exact sequence \eqref{E:ica1scktilde} becomes, now$\, :$ 
\begin{equation}\label{E:ica1scktildeg=13}
0 \lra \sci_C(3) \lra {\widetilde \sck} \lra 2\sco_\piii(1) 
\overset{{\overline \delta}^\prim}{\lra} (\sco_X(3) \otimes \sco_X[C]) \vb 
C_0 \cap C \lra 0\, , 
\end{equation} 
where ${\overline \delta}^\prim$ is defined by $\delta_i \vb C_0 \cap C$, $i = 
1,\, 2$. Taking into account the exact sequence \eqref{E:oescktilde},   
one deduces immediately that $\tH^0(E(-2)) = 0$ (one can, actually, 
show that no non-trivial linear combination of $\delta_1 \vb C_0$ and 
$\delta_2 \vb C_0$ vanishes on $C_0 \cap C$ hence $\tH^0(E(-1)) = 0$) and that 
$\h^1(E(-3)) = 4$. On the other hand, using the extension \eqref{E:aeveescic}, 
one gets that $\h^0(E^\vee(1)) = 2$, $\tH^1(E(-4)) \simeq \tH^2(E^\vee)^\vee = 
0$, $\tH^1(E^\vee) = 0$, $\tH^1(E^\vee(-1)) = 0$ and $\h^1(E^\vee(1)) \simeq 
\h^1(\sci_C(1)) = 3$. Lemma~\ref{L:quasilinearmonad} from 
Appendix~\ref{A:monads} shows, now, that $F := E(-2)$ is the cohomology of a 
monad of the above form. Since $\tH^1(E^\vee(3)) \simeq \tH^1(\sci_C(3)) = 0$ 
it follows that  $\tH^0(\beta^{\prim \vee}(1))$ is surjective. 

\vskip2mm 

\noindent 
{\bf Claim 2.}\quad $\tH^1(E) = 0$.   

\vskip2mm 

\noindent 
\emph{Indeed}, since the morphism ${\overline \delta}^\prim$ from the exact 
sequence \eqref{E:ica1scktildeg=13} is an epimorphism, it follows that a 
general linear combination $a_1\delta_1 + a_2\delta_2$ vanishes at no point 
of $C_0 \cap C$. One deduces that the composite morphism$\, :$ 
\[
\sco_\piii \xra{\left(\begin{smallmatrix} a_1\\ a_2 \end{smallmatrix}\right)} 
2\sco_\piii \xra{{\overline \delta}^\prim(-1)} (\sco_X(2) \otimes \sco_X[C]) \vb 
C_0 \cap C  
\]  
is an epimorphism hence its kernel is isomorphic to $\sci_{C_0 \cap C}$. Since 
$C_0 \cap C$ consists of four points that are not coplanar, one has 
$\tH^1(\sci_{C_0 \cap C}(1)) = 0$. One deduces that 
$\tH^0({\overline \delta}^\prim)$ is surjective and this implies that 
$\tH^1(E) = 0$. 

\vskip2mm 

It remains to verify condition (c) from Thm.~\ref{T:main}. Recall, for this 
purpose, the exact sequences \eqref{E:sckscktilde} and 
\eqref{E:scoscksckprimebis} from Subsection~1.5 that become, 
in our case$\, :$ 
\begin{gather*} 
0 \lra \sck \lra {\widetilde \sck} \lra \sci_{Y \cup W , X}(3) \lra 0\, ,\\ 
0 \lra \sco_\piii \lra \sck \lra 2\sco_\piii(1) \overset{\gamma^\prim}{\lra}  
(\sco_X(1) \otimes \sco_X[C_0]) \vb C_0 \lra 0\, , 
\end{gather*} 
where $\gamma^\prim$ is defined by $\gamma_i \vb C_0$, $i = 1, \, 2$. 
One deduces, from the second exact sequence, that $\sck$ is 0-regular. Taking 
into account the exact sequence \eqref{E:oescktilde}, one gets that the 
cokernel of the evaluation morphism of $E$ is isomorphic to the cokernel of 
the evaluation morphism of $\sci_{Y \cup W , X}(3)$.     

\vskip2mm 

\noindent 
{\bf Claim 3.}\quad \emph{There exists a unique divisor} $\Delta$ \emph{in the 
complete linear system} $\vert \, \sco_X(3) \otimes \sco_X[-Y] \, \vert$ 
\emph{such that} $\Delta \cap C_0 = W$ \emph{as schemes}. 

\vskip2mm 

\noindent
\emph{Indeed}, the restriction map 
$\tH^0(\sco_X(3) \otimes \sco_X[-Y]) \ra 
\tH^0((\sco_X(3) \otimes \sco_X[-Y]) \vb C_0)$ is bijective because  
its kernel is $\tH^0(\sco_X(3) \otimes \sco_X[-C])$ and its 
cokernel embeds into $\tH^1(\sco_X(3) \otimes \sco_X[-C])$ and these cohomology 
groups are both zero because $\h^0(\sci_C(3)) = 1$ and $\tH^1(\sci_C(3)) = 0$. 
One uses, now, the fact that $(\sco_X(3) \otimes \sco_X[-Y]) \vb C_0$ is 
a line bundle of degree 6 on $C_0 \simeq \pj$ and $W$ is an effective divisor 
of degree 6 on $C_0$. 

\vskip2mm 
  
One deduces, from Claim 3, that the cokernel of the evaluation morphism of 
$\sci_{Y \cup W , X}(3) \simeq \sci_{W , X}(3) \otimes \sco_X[-Y]$ is isomorphic to 
$\sci_{W , \Delta} \otimes \scl$, where $\scl$ is the restriction of 
$\sco_X(3) \otimes \sco_X[-Y]$ to $\Delta$. Moreover, since $W$ consists 
of six \emph{simple} points of $C_0$ and since $\Delta \cap C_0 = W$ as 
schemes, it follows that the points of $W$ are \emph{nonsingular} points of 
$\Delta$, hence $\sci_{W , \Delta} \otimes \scl$ is an invertible 
$\sco_\Delta$-module. The results of Martin-Deschamps and Perrin \cite{mdp} 
recalled in Lemma~\ref{L:dependencyloci} and Remark~\ref{R:dependencyloci} from 
Appendix~\ref{A:monads} imply, now, that the dependency locus of three general 
global sections of $E$ is a nonsingular curve in $\piii$.\hfill $\Box$  


\section{The Case $g = 12$}\label{S:g=12}

\noindent 
{\bf Proof of Theorem~\ref{T:main} for {\boldmath{$g = 12$}}.}\quad  
In this case, $d = 12$, $r = 4$, and the monads from the statement of 
Theorem~\ref{T:main} are of the form$\, :$ 
\begin{equation}\label{E:monadg=12} 
0 \lra 0 \lra 7\sco_\piii \overset{\alpha}{\lra} 3\sco_\piii(1) \lra 0\, .   
\end{equation}
Using the notation from Subsection~1.5, one has $A = 
\sco_\piii(-3) \oplus 2\sco_\piii(-1)$ and $C$ is a nonsingular 
rational curve of degree 5 contained in a nonsingular cubic surface $X$ in 
$\piii$. We shall specify, now, the divisors $D_0$, $D_1$, $D_2$. Let $C_0 
\subset X$ be a twisted cubic curve which is the strict transform of a general 
(nonsingular) conic ${\overline C}_0 \subset \pii$, containing $P_1,\, P_2,\, 
P_3$ but none of the points $P_4,\, P_5,\, P_6$, and intersecting 
$\overline C$ (the conic in $\pii$ whose strict transform on $X$ is $C$) in 
four distinct points (including $P_1$$\, ;$ it follows that $C_0 \cap C$ 
consists of three simple points). One has $C_0 \sim C - E_2 - E_3$. Let 
$\gamma_i$, $i= 0,\, 1,\, 2$, $C_1$, $C_2$, $D$ and $W := D \cap C_0$ be as at 
the beginning of the above proof of the case $g = 13$. We take $D_0 = C_0 + 
E_2 + E_3$ and $D_i = D + C_i + E_2 + E_3$, $i = 1,\, 2$. 

\vskip2mm 

One shows easily, as in Claim~1 and Claim~2 of the above proof of the case 
$g = 13$, that $F := E(-2)$ is the cohomology sheaf of a monad of the 
form$\, :$ 
\[
0 \lra 2\sco_\piii(-1) \overset{\beta^\prim}{\lra} 7\sco_\piii \oplus 
2\sco_\piii(-1) \overset{\alpha^\prim}{\lra} 3\sco_\piii(1) \lra 0\, ,  
\]
with $\tH^0(\beta^{\prim \vee}(1))$ surjective, and that $\tH^1(E) = 0$. (One 
can, moreover, show that $\tH^0(E(-1)) = 0$). It thus remains to show that the 
multiplication map $\tH^0(E) \otimes S_1 \ra \tH^0(E(1))$ is surjective. 
This is equivalent to the fact that the multiplication map 
$\tH^0(\sci_{E_2 \cup E_3 \cup W}(3)) \otimes S_1 \ra 
\tH^0(\sci_{E_2 \cup E_3 \cup W}(4))$ is surjective (one uses arguments similar to 
those used in the above proof of the case $g = 13$). 

Recall that $W$ consists of six \emph{general} simple points of $C_0$. 
As we saw in the proof of the case $g = 11$ of Theorem~\ref{T:main} from 
Section~\ref{S:g=11}, there is no quadric surface in $\piii$ containing 
$E_2 \cup E_3 \cup C_0$. On the other hand, there is a unique quadric surface 
$Q_i$ in $\piii$ containing $E_i \cup C_0$ (because $E_i$ intersects $C_0$ 
in one simple point), $i = 2,\, 3$. Choose two general points $R_4$ and 
$R_5$ of $C_0$ such that the line $L_1 \subset \piii$ joining them 
does not intersect $E_2$ and $E_3$ and is not contained in any of the surfaces 
$Q_2$ and $Q_3$. In this case $L_1 \cup E_i \cup C_0$ is contained in no 
quadric surface, $i = 2,\, 3$. One can choose, now, four general points 
$R_0 , \ldots , R_3$ of $C_0 \setminus (L_1 \cup E_2 \cup E_3)$ such that 
$L_1$, $E_2$, $E_3$ and $\{R_0 , \ldots , R_3\}$ satisfy the 
hypothesis of Lemma~\ref{L:3lines+4points} from Appendix~\ref{A:auxiliary} 
(see the proof of the case $g = 11$ of Theorem~\ref{T:main} in 
Section~\ref{S:g=11}). 
In this case, taking $W = \{R_0 , \ldots , R_5\}$ and applying 
Cor.~\ref{C:2lines+6points}, one gets that the multiplication map 
$\tH^0(\sci_{E_2 \cup E_3 \cup W}(3)) \otimes S_1 \ra 
\tH^0(\sci_{E_2 \cup E_3 \cup W}(4))$ is surjective.\hfill $\Box$       

\section{The Cases $5 \leq g \leq 7$}\label{S:g=5-7} 

In these cases, the least integer $d$ for which $\rho(g,3,d) \geq 0$ is 
$d = g + 2$. The method of Chang and Ran, as formulated in 
Section~\ref{S:method}, does not work anymore because, assuming that the curve 
$Y$ is linearly normal, one has $\h^0(\omega_Y(-1)) = \h^1(\sco_Y(1)) = 1$ and 
$\text{deg}\, \omega_Y(-1) = g - 4 > 0$ hence $\omega_Y(-1)$ cannot be 
globally generated (a condition that was necessary for the construction of a 
vector bundle). Things can be, however, fixed by working with rank 2 reflexive 
sheaves instead of vector bundles (this is, actually, the approach from 
Chang \cite[Cor.~4.8.1]{c}).   

More precisely, if $Y \subset \piii$ is a linearly normal, nonsigular, 
connected curve of genus $g$, $5 \leq g \leq 7$, and degree $d = g + 2$ then, 
as we saw above, $\h^0(\omega_Y(-1)) = 1$. This implies immediately that 
$\mu_0(Y)$ is injective. Moreover, for reasons of degree and genus, 
$\tH^0(\sci_Y(2)) = 0$. A non-zero global section of $\omega_Y(-1)$ defines an 
extension$\, :$ 
\[
0 \lra \sco_\piii \lra \sce \lra \sci_Y(5) \lra 0\, , 
\]     
with $\sce$ a rank 2 reflexive sheaf with $c_1(\sce) = 5$.  
Consider the ``normalized'' rank 2 reflexive sheaf $\scf := \sce(-3)$, with 
Chern classes $c_1(\scf) = -1$, $c_2(\scf) = \text{deg}\, Y - 6 = g - 4$, 
$c_3(\scf) = \text{deg}\, \omega_Y(-1) = g - 4$. One has $\tH^0(\scf) = 0$ 
(because $\tH^0(\sci_Y(2)) = 0$) and $\tH^1(\scf(-1)) = 0$ (because 
$\tH^1(\sci_Y(1)) = 0$). Now, one has$\, :$ 

\begin{lemma}\label{L:monadscf} 
Let $g$ be an integer with $5 \leq g \leq 7$ and let $\scf$ be a rank $2$ 
reflexive sheaf on $\piii$ with Chern classes 
$c_1(\scf) = -1$, $c_2(\scf) = g - 4$, $c_3(\scf) = g - 4$. If 
${\fam0 H}^0(\scf) = 0$ and ${\fam0 H}^1(\scf(-1)) = 0$ then $\scf(1)$ is the 
cohomology sheaf of a monad of the form$\, :$ 
\begin{equation}\label{E:monadgleq7} 
0 \lra (g - 4)\sco_\piii(-1) \overset{\beta}{\lra} (2g - 7)\sco_\piii 
\overset{\alpha}{\lra} (g - 5)\sco_\piii(1) \lra 0\, . 
\end{equation} 
Here  ``monad'' means that $\alpha$ is an epimorphism, $\beta^\vee$ is an  
epimorphism except at finitely many points, and $\alpha \circ \beta = 0$.  
\end{lemma}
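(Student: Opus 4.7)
The proof parallels that of Lemma~\ref{L:linearmonad} (the analogous statement for vector bundles), with the modifications needed to allow the cohomology sheaf of the monad to be reflexive rather than locally free. Set $\sch := \scf(1)$; the goal is to display $\sch$ as the cohomology of a linear monad of the claimed form, in the weakened sense stated.

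For a rank~$2$ reflexive sheaf on a smooth threefold one has $\text{pd}(\scf) \leq 1$ (by Auslander--Buchsbaum, since $\text{depth}\, \scf \geq 2$), so $\sce xt^i(\scf, \sco_\piii) = 0$ for $i \geq 2$, while $\sce xt^1(\scf, \sco_\piii)$ is a zero-dimensional sheaf supported exactly on the (finite) non-locally-free locus of $\scf$, of length equal to $c_3(\scf) = g - 4$. Dualizing a monad $\rho\sco_\piii(-1) \to \sigma\sco_\piii \to \tau\sco_\piii(1)$ whose cohomology sheaf is $\sch$ therefore produces a complex whose cohomologies are $\sch^\vee = \scf^\vee(-1)$ in degree~$0$ and $\sce xt^1(\scf, \sco_\piii)(-1)$ in degree~$1$; the honest monad requirement ``$\beta^\vee$ globally an epimorphism'' is then naturally replaced by ``$\beta^\vee$ an epimorphism off the support of $\sce xt^1(\scf, \sco_\piii)$'', which is precisely the weakened notion in the statement.

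The cohomological vanishings needed for the construction, after translation through the identification $\sch^\vee = \scf^\vee(-1)$, are
\[
\tH^0(\scf) = \tH^0(\scf^\vee(-2)) = \tH^1(\scf(-1)) = \tH^1(\scf^\vee(-3)) = 0.
\]
Two of these are the hypotheses; for the remaining two I would use Serre duality on $\piii$ combined with the local-to-global spectral sequence $\tH^p(\sce xt^q(\scf, \sco_\piii(-4))) \Rightarrow \text{Ext}^{p+q}(\scf, \sco_\piii(-4))$. The resulting expressions involve groups $\tH^\ast(\scf(-\ast))$ and global sections of the zero-dimensional $\sce xt^1(\scf, \sco_\piii)$, and the desired vanishings follow from the hypothesised vanishings on $\scf$ together with the trivial remark $\tH^0(\scf(-1)) = 0$, which is a consequence of torsion-freeness of $\scf$.

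The ranks $\rho,\,\sigma,\,\tau$ are then pinned down by Euler-characteristic identities, with the caveat that on the ``dual'' side the formula carries an $\sce xt^1$-correction$\, :$ $\tau = -\chi(\scf) = g-5$ and $\rho = -\chi(\scf^\vee(-2)) + \ell(\sce xt^1(\scf, \sco_\piii)) = 0 + (g-4) = g-4$ (both computed via the Riemann--Roch formula for rank~$2$ reflexive sheaves on $\piii$, which contains the $c_3$-term), whence $\sigma = 2 + \tau + \rho = 2g-7$. The main obstacle I expect is precisely this passage from vector bundles to reflexive sheaves$\, :$ one must verify that the weakened monad notion is exactly what the construction produces, with the degeneracy scheme of $\beta^\vee$ matching the support of $\sce xt^1(\scf, \sco_\piii)$ (and not being strictly larger), which requires careful bookkeeping of the singular locus of $\scf$ throughout the spectral sequence computation.
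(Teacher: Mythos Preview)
Your approach is sound in outline but differs substantially from the paper's, and one step is underspecified.

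The paper does \emph{not} parallel Lemma~\ref{L:linearmonad}. Instead it invokes the \emph{spectrum} of a stable rank~$2$ reflexive sheaf (Hartshorne~\cite[\S 7]{ha}): $\tH^0(\scf)=0$ gives stability, and $\tH^1(\scf(-1))=0$ forces the spectrum to be $(-1,\ldots,-1)$, which yields at once $\tH^1(\scf(l))=0$ for $l\le -1$ and $\tH^2(\scf(l))=0$ for $l\ge -1$. With these vanishings the paper builds the monad concretely, without ever dualising: the universal extension $0\to\scf\to\scg\to (g-5)\sco_\piii\to 0$ kills $\tH^1_\ast$; one checks $\scg$ is $1$-regular; the evaluation map $(2g-7)\sco_\piii\twoheadrightarrow\scg(1)$ has locally free kernel $K$ with $\tH^1_\ast(K)=\tH^2_\ast(K)=0$; and Horrocks' splitting criterion plus a Chern-class count give $K\simeq (g-4)\sco_\piii(-1)$. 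The failure of $\beta^\vee$ to be surjective is then visibly supported on the singular locus of $\scg(1)\simeq\Cok\beta$, i.e.\ on that of $\scf$.

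Your route via the local-to-global spectral sequence can be completed, but the assertion that $\tH^1(\scf^\vee(-3))=0$ follows from the hypotheses and $\tH^0(\scf(-1))=0$ alone is not quite right: the spectral sequence only gives an injection $\tH^1(\scf^\vee(-3))\hookrightarrow\tH^2(\scf(-1))^\vee$, and to kill the target you still need the Riemann--Roch identity $\chi(\scf(-1))=0$ (or a hyperplane-restriction argument, or the spectrum). Once you have $\tH^2(\scf(-1))=0$, the Barth--Hulek construction you propose becomes delicate precisely because the left side of the monad is governed by $\tH^2_\ast(\scf)$, which for a non--locally-free sheaf is \emph{not} Serre-dual to $\tH^1_\ast(\scf^\vee)$; the cleanest fix is in fact the paper's regularity argument. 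Your numerics $\rho=-\chi(\scf^\vee(-2))+\ell(\sce xt^1)=g-4$ are correct and your identification of the degeneracy locus of $\beta^\vee$ with the support of $\sce xt^1(\scf,\sco_\piii)$ is exactly what emerges from either construction.
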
 

\begin{proof}  
We use the properties of the \emph{spectrum} of a stable rank 2 reflexive 
sheaf from Hartshorne \cite[Sect.~7]{ha}. Since $\tH^0(\scf) = 0$, $\scf$ 
is stable. Since $\tH^1(\scf(-1)) = 0$, the spectrum of $\scf$ must be 
$k_\scf = (-1, \ldots ,-1)$ ($g - 4$ times).  
It follows that $\tH^1(\scf(l)) = 0$ for $l \leq -1$ 
and $\tH^2(\scf(l)) = 0$ for $l \geq -1$. Since $\tH^2(\scf(-1)) = 0$ and 
$\tH^3(\scf(-2)) \simeq \tH^0(\scf^\vee(-2))^\vee \simeq \tH^0(\scf(-1))^\vee = 
0$, the Castelnuovo-Mumford lemma (in the form stated in  
\cite[Lemma~1.21]{acm1}) implies that the graded $S$-module $\tH^1_\ast(\scf)$ 
is generated in degrees $\leq 0$, hence it is generated by $\tH^1(\scf)$. 
By Riemann-Roch, $\h^1(\scf) = -\chi(\scf) = g - 5$. Consider, now, the 
universal extension$\, :$ 
\[
0 \lra \scf \lra \scg \lra (g - 5)\sco_\piii \lra 0\, . 
\] 
$\scg$ is a rank 4 reflexive sheaf with $\tH^1_\ast(\scg) = 0$ and with 
$\tH^0(\scg) = 0$. Since $\tH^2(\scg(-1)) \simeq \tH^2(\scf(-1)) = 0$ and 
$\tH^3(\scg(-2)) \simeq \tH^3(\scf(-2)) = 0$, $\scg$ is 1-regular. 
By Riemann-Roch$\, :$ 
\[
\h^0(\scg(1)) = \h^0(2\sco_\piii(1)) + \h^0(\scf(1)) - \h^1(\scf(1)) 
= \h^0(2\sco_\piii(1)) + \chi(\scf(1)) = 2g - 7\, . 
\] 
The kernel $K$ of the evaluation epimorphism $(2g - 7)\sco_\piii \ra \scg(1)$ 
of $\scg(1)$ is locally free of rank $g - 4$. One has $\tH^1_\ast(K) = 0$ 
and $\tH^2_\ast(K) \simeq \tH^1_\ast(\scg(1)) = 0$, hence $K$ is a direct sum of 
line bundles. Since $c_1(K) = -(g - 4)$ and $\tH^0(K) = 0$ it follows that 
$K \simeq (g - 4)\sco_\piii(-1)$. 
\end{proof}  

Conversely, if $\scf(1)$ is the cohomology sheaf of a monad of the form 
\eqref{E:monadgleq7} then $\scf$ is a rank 2 reflexive sheaf with 
$\tH^0(\scf) = 0$ and $\tH^1(\scf(-1)) = 0$. Moreover, since $g - 5 \leq 2$,  
Lemma~\ref{L:5ora2o(1)} from Appendix~\ref{A:monads} implies that  
$\tH^1(\scf(2)) = 0$ hence $\scf$ is 3-regular. 
In particular, $\scf(3)$ is globally generated. If, moreover, 
$\sce xt_{\sco_\piii}^1(\scf , \sco_\piii) \simeq \sco_\Gamma$, where $\Gamma$ is 
a 0-dimensional subscheme of $\piii$ consisting of simple points, then the 
zero scheme of a general global section of $\scf(3)$ is a \emph{nonsigular} 
curve $Y$ (see, for example, \cite[Prop.~4]{bc}). $Y$ has genus $g$ 
and degree $d = g + 2$, and is linearly normal. 

Finally, the monads of the form \eqref{E:monadgleq7} can be put together into 
a family with irreducible, rational base because, by Lemma~\ref{L:5ora2o(1)} 
from Appendix~\ref{A:monads}, $\tH^0(\alpha(1))$ is surjective, for any 
such monad (again, since $g - 5 \leq 2$). Consequently, in order to prove the 
unirationality of $\mathcal{M}_g$, for $5 \leq g \leq 7$, it suffices to show 
that there exist rank 2 reflexive sheaves $\scf$, with $c_1(\scf) = -1$, 
$c_2(\scf) = g - 4$, $c_3(\scf) = g - 4$, such that $\tH^0(\scf) = 0$, 
$\tH^1(\scf(-1)) = 0$ and such that $\sce xt_{\sco_\piii}^1(\scf , \sco_\piii)$ is 
as above. One can construct such sheaves as general extensions$\, :$
\[
0 \lra \sco_\piii(-2) \lra \scf \lra \sci_C(1) \lra 0\, , 
\]  
with $C$ a rational curve of degree $g - 2$. Here general means that the 
extension is defined by a global section of $\omega_C(1) \simeq 
\sco_\pj(g - 4)$ vanishing in $g - 4$ distinct points. 

\begin{remark}\label{R:g=4} 
In the case $g = 4$ one gets $d = 6$. 
Then $\omega_Y(-1)$ has degree 0 and, since $\h^0(\sco_Y(1)) - 
\h^0(\omega_Y(-1)) = 3$ and $\h^0(\sco_Y(1)) \geq 4$, $\h^0(\omega_Y(-1)) > 0$. 
It follows that $\omega_Y(-1) \simeq \sco_Y$. A non-zero global section of 
$\omega_Y(-1)$ defines an extension $0 \ra \sco_\piii(-3) \ra \scf \ra 
\sci_Y(2) \ra 0$, with $\scf$ locally free of rang 2, with $c_1(\scf) = -1$, 
$c_2(\scf) = 0$. Since $\tH^0(\scf) \simeq \tH^0(\sci_Y(2)) \neq 0$ (because 
$\tH^1(\sco_Y(2)) = 0$ hence $\h^0(\sco_Y(2)) = 9$) and 
$\tH^0(\scf(-1)) = 0$, one gets that $\scf \simeq \sco_\piii \oplus 
\sco_\piii(-1)$ hence $Y$ is a complete intersection of type $(2,3)$. One can 
view, perhaps, the whole story above as a generalization of this simple fact.   
\end{remark}

\appendix 
\section{Monads and Dependency Loci}\label{A:monads}

\begin{lemma}\label{L:quasilinearmonad} 
Let $F$ be a vector bundle on $\piii$. If ${\fam0 H}^0(F(-1)) = 0$, 
${\fam0 H}^1(F(-2)) = 0$, ${\fam0 H}^2(F(-1)) = 0$, ${\fam0 H}^2(F(-2)) = 0$ 
and ${\fam0 h}^3(F(-3)) \leq 3$ then $F$ is the cohomology sheaf of a 
monad of the form$\, :$ 
\[
0 \ra {\fam0 H}^1(F^\vee(-1))^\vee \otimes \sco(-1) \ra 
\begin{matrix}
a\sco\\ \oplus\\ {\fam0 H}^0(F^\vee(-1))^\vee \otimes \sco(-1) 
\end{matrix}
\ra {\fam0 H}^1(F(-1)) \otimes \sco(1) \ra 0.
\]
\end{lemma} 

\begin{proof} 
$\h^0(F^\vee(-1)) = \h^3(F(-3)) \leq 3$ implies that $\tH^0(F^\vee(-2)) = 0$ 
hence, by Serre duality, $\tH^3(F(-2)) = 0$. Now, the following assertions are 
consequences of the Castelnuovo-Mumford lemma (in the form stated in 
\cite[Lemma~1.21]{acm1})$\, :$ 

\vskip2mm 

\noindent 
$\bullet$\quad $\tH^2(F(-1)) = 0$ and $\tH^3(F(-2)) = 0$ imply that the graded 
$S$-module $\tH^1_\ast(F)$ is generated in degrees $\leq 0$. 

\noindent 
$\bullet$\quad $\h^2(F^\vee(-2)) = \h^1(F(-2)) = 0$ and $\h^3(F^\vee(-3)) = 
\h^0(F(-1)) = 0$ imply that $\tH^2(F^\vee(l)) = 0$ for $l \geq -2$ hence 
$\tH^1(F(l)) = 0$ for $l \leq -2$. 

\noindent 
$\bullet$\quad $\h^2(F^\vee(-2)) = \h^1(F(-2)) = 0$ and $\h^3(F^\vee(-3)) = 
\h^0(F(-1)) = 0$ imply that the graded $S$-module $\tH^1_\ast(F^\vee)$ is 
generated in degrees $\leq -1$. 

\noindent 
$\bullet$\quad $\tH^2(F(-1)) = 0$ and $\tH^3(F(-2)) = 0$ imply that 
$\tH^2(F(l)) = 0$ for $l \geq -1$ hence $\tH^1(F^\vee(l)) = 0$ for $l \leq -3$. 
Moreover, $\tH^1(F^\vee(-2)) \simeq \tH^2(F(-2))^\vee = 0$. 

\vskip2mm 

Consequently, $\tH^1_\ast(F)$ has $\h^1(F(-1))$ minimal generators in degree 
$-1$ and some number $b$ of minimal generators in degree 0, and  
$\tH^1_\ast(F^\vee)$ is generated by $\tH^1(F^\vee(-1))$. Now, applying Horrocks' 
method of ``killing cohomology'' (explained in Barth and Hulek \cite{bh}), one 
gets that $F$ is the cohomology sheaf of a monad of the form$\, :$ 
\[
0 \ra \tH^1(F^\vee(-1))^\vee \otimes_k \sco_\piii(-1) \overset{\beta}{\lra} A 
\overset{\alpha}{\lra} \tH^1(F(-1)) \otimes_k \sco_\piii(1) \oplus b\sco_\piii 
\ra 0\, ,   
\]
with $A$ is a direct sum of line bundles. One has $\tH^0(A(-1)) = 0$ (since  
$\tH^0(F(-1)) = 0$), $\h^0(A^\vee(-1)) = \h^0(F^\vee(-1))$ and $\tH^0(A^\vee(-2)) 
= 0$ (because $\tH^0(F^\vee(-2)) = 0$). One deduces that $A \simeq a\sco_\piii 
\oplus \tH^0(F^\vee(-1))^\vee \otimes \sco_\piii(-1)$, for some integer $a$. 
Since the component $a\sco_\piii \ra b\sco_\piii$ of $\alpha$ is 0 (because 
$\tH^1_\ast(F)$ has $b$ minimal generators of degree 0) and since there is no 
epimorphism $3\sco_\piii(-1) \ra \sco_\piii$ one deduces that $b = 0$.    
\end{proof}

\begin{corollary}\label{C:linearmonad} 
Let $F$ be a vector bundle on $\piii$. If ${\fam0 H}^0(F(-1)) = 0$, 
${\fam0 H}^1(F(-2)) = 0$, ${\fam0 H}^2(F(-2)) = 0$ and 
${\fam0 H}^3(F(-3)) = 0$ then $F$ is the cohomology sheaf of a linear 
monad of the form$\, :$ 
\[
0 \ra {\fam0 H}^1(F^\vee(-1))^\vee \otimes_k \sco_\piii(-1) \ra 
a\sco_\piii \ra 
{\fam0 H}^1(F(-1)) \otimes_k \sco_\piii(1) \ra 0\, . 
\]
Moreover, ${\fam0 h}^1(F(-1)) = - \chi(F(-1))$ and ${\fam0 h}^1(F^\vee(-1)) = 
- \chi(F^\vee(-1))$.  
\end{corollary} 

\begin{proof} 
Since $\tH^2(F(-2)) = 0$ and $\tH^3(F(-3)) = 0$, the Lemma of 
Castelnuovo-Mumford implies that $\tH^2(F(-1)) = 0$ (and $\tH^3(F(l)) = 0$ 
for $l \geq -3$). One can apply, now, Lemma~\ref{L:quasilinearmonad}. 
For the last assertions from the statement, one notices that $\tH^i(F(-1)) 
= 0$ for $i \neq 1$ and that $\tH^i(F^\vee(-1)) = 0$ for $i \neq 1$ (because 
$F^\vee$ satisfies the hypothesis of the corollary, too). 
\end{proof}

\begin{remark}\label{R:linearmonad} 
It is easy to see that, conversely, if a vector bundle $F$ is the cohomology 
sheaf of a linear monad $0 \ra a\sco_\piii(-1) \ra b\sco_\piii \ra 
c\sco_\piii(1) 
\ra 0$ then it satisfies the hypothesis of Cor.~\ref{C:linearmonad}. 
\end{remark} 

\begin{lemma}\label{L:h1evbh=0} 
Let $E$ be a vector bundle on $\p^n$, $n \geq 2$. If ${\fam0 H}^1(E_H) = 0$,  
for every hyperplane $H \subset \p^n$, then ${\fam0 h}^1(E) \leq 
{\fam0 max}(0 , {\fam0 h}^1(E(-1)) - n)$. 
\end{lemma}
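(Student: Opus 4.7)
The plan is to translate the hypothesis, via the standard hyperplane restriction sequence, into a property of the multiplication map on $\tH^1_\ast(E)$, and then reduce to a classical determinantal dimension count in linear algebra.

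First I would write, for each hyperplane $H = \{h = 0\} \subset \p^n$, the exact sequence $0 \ra E(-1) \xra{\, h\, } E \ra E_H \ra 0$. Taking cohomology and using the hypothesis $\tH^1(E_H) = 0$ gives the four-term exact piece
\[
\tH^0(E_H) \lra \tH^1(E(-1)) \xra{\, \cdot h\, } \tH^1(E) \lra 0\, ,
\]
so multiplication by each nonzero linear form $h$ yields a surjection $V := \tH^1(E(-1)) \twoheadrightarrow W := \tH^1(E)$. If $W = 0$ the conclusion holds trivially, so from now on I assume $W \neq 0$.

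The slices $\mu_h := \mu(\cdot \otimes h) \colon V \ra W$, $h \in S_1$, assemble into a single map $\mu \colon V \otimes S_1 \ra W$. Dualizing produces an injection $\mu^\vee \colon W^\vee \hookrightarrow V^\vee \otimes S_1^\vee \simeq \text{Hom}(S_1 , V^\vee)$, under which the condition ``$\mu_h$ surjective for all $h \neq 0$'' becomes the condition that every nonzero element of $U := \mu^\vee(W^\vee)$, regarded as a linear map $S_1 \ra V^\vee$, is \emph{injective}. The mere existence of injective maps $S_1 \ra V^\vee$ already forces $\dim V \geq n + 1$$\, ;$ otherwise $U$, hence $W$, would vanish.

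The last step is a classical dimension count. In $\text{Hom}(S_1 , V^\vee) \simeq \text{Mat}_{\dim V \times (n+1)}$ the closed subvariety $\Delta$ of rank-deficient (i.e., non-injective) linear maps is determinantal of codimension $\dim V - n$. Projectivizing, $\p(U) \subset \p(\text{Hom}(S_1 , V^\vee))$ is a linear subspace \emph{disjoint} from $\p(\Delta)$, so the standard intersection-in-projective-space principle (two closed subvarieties with $\dim Y + \dim Z \geq \dim \p^N$ must meet) forces $\dim \p(U) + \dim \p(\Delta) < \dim \p(\text{Hom}(S_1 , V^\vee))$. A brief bookkeeping then converts this into $\dim U \leq \dim V - n$, that is, $\h^1(E) \leq \h^1(E(-1)) - n$, which combined with the $W = 0$ case gives the claimed bound $\h^1(E) \leq \max(0 , \h^1(E(-1)) - n)$.

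The only genuine subtlety is this last step$\, :$ one must correctly identify the codimension $\dim V - n$ of the variety of rank-deficient $\dim V \times (n+1)$ matrices and invoke the intersection principle in projective space. Both ingredients are classical, so the argument is essentially just the translation above plus a short dimension count.
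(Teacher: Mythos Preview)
Your proof is correct and follows essentially the same route as the paper's: both extract surjectivity of multiplication by every linear form from the restriction sequence and then dualize to a bilinear map $W^\vee \times S_1 \to V^\vee$ that is nondegenerate in each variable. The paper simply cites Hartshorne's Bilinear Map Lemma \cite[Lemma~5.1]{ha} for the conclusion $\dim V \geq \dim W + n$, whereas you unpack its standard proof via the codimension of the determinantal locus and the projective intersection principle.
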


\begin{proof}
If $H \subset \p^n$ is a hyperplane of equation $h = 0$ then, using the 
exact sequence$\, :$ 
\[
\tH^1(E(-1)) \overset{h}{\lra} \tH^1(E) \lra \tH^1(E_H) = 0, 
\]
one deduces that the multiplication by the linear form $h \colon \tH^1(E(-1)) 
\ra \tH^1(E)$ is surjective. Applying, now, the Bilinear Map Lemma 
\cite[Lemma~5.1]{ha}, to the bilinear map $\tH^1(E)^\vee \times 
\tH^0(\sco_{\p^n}(1)) \ra \tH^1(E(-1))^\vee$ deduced from the multiplication 
map $\tH^1(E(-1)) \times \tH^0(\sco_{\p^n}(1)) \ra \tH^1(E)$, one gets the 
inequality from the statement. 
\end{proof}

\begin{lemma}\label{L:5ora2o(1)} 
If $\phi \colon m\sco_{\p^n} \ra 2\sco_{\p^n}(1)$ is an epimorphism of vector 
bundles on $\p^n$, $n \geq 1$, then ${\fam0 H}^0(\phi(1)) \colon 
{\fam0 H}^0(m\sco_{\p^n}(1)) \ra {\fam0 H}^0(2\sco_{\p^n}(2))$ is surjective.  
\end{lemma}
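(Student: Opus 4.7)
The plan is to show that $\Cok \tH^0(\phi(1))$ vanishes by proving that any linear functional on $\tH^0(2\sco_{\p^n}(2)) = S_2^{\oplus 2}$ annihilating the image of $\tH^0(\phi(1))$ must be zero. Under the canonical identification $S_2^* \cong S^2 V^*$, with $V := S_1$, such an annihilator is a pair $(A,B) \in (S^2 V^*)^{\oplus 2}$ of quadratic forms, inducing symmetric linear maps $T_A,T_B \colon V \to V^*$ by polarisation. A direct unpacking shows the annihilator condition amounts to the single relation
\[
T_A\,\ell_{1i} + T_B\,\ell_{2i} \;=\; 0 \quad\text{in } V^*, \qquad i = 1, \ldots, m,
\]
where $(\ell_{1i})$ and $(\ell_{2i})$ in $V^m$ are the two rows of the matrix of linear forms defining $\phi$. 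Suppose such $(A,B) \neq 0$ exists; the aim is to produce an explicit point of $\p^n$ at which $\phi$ fails to be surjective, contradicting the hypothesis.

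The first step exploits only the epimorphism hypothesis to extract two structural consequences. Each row of $\phi$ must span $V$: if, say, $\text{span}(\ell_{1i})_i \neq V$, then any nonzero covector $v^* \in V^*$ in its annihilator produces a point $p = [v^*]$ at which every $\ell_{1i}$ vanishes, forcing the columns of $\phi$ at $p$ to lie on the second coordinate axis of $k^2$ and hence not to span it. Second, $\Ker T_A = \Ker T_B$: if $v \in \Ker T_A \setminus \Ker T_B$, then pairing the annihilator relation with $v$ and using the symmetry of $T_A,T_B$ yields $(T_B v)(\ell_{2i}) = 0$ for every $i$, so at the point $p = [T_B v] \neq 0$ the entire second row of $\phi$ vanishes, giving the same kind of coordinate-axis failure.

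Set $N := \Ker T_A = \Ker T_B$. The induced maps $\bar T_A,\bar T_B \colon V/N \hookrightarrow N^\perp \subset V^*$ are both injective, and the relation $\bar T_A \bar\ell_{1i} = -\bar T_B \bar\ell_{2i}$ combined with the fact that the $\bar\ell_{1i}$ span $V/N$ forces $\text{image}\,\bar T_A \subseteq \text{image}\,\bar T_B$; by symmetry the images coincide, so $\bar T_A$ and $\bar T_B$ are both isomorphisms onto $N^\perp$ and $C := \bar T_A^{-1}\bar T_B$ is a well-defined automorphism of $V/N$. Since $k$ is algebraically closed and $V/N \neq 0$ (as $(A,B) \neq 0$ forces $N \neq V$), $C$ admits an eigenvector $\bar v$ with eigenvalue $\lambda \in k$. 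Setting $v^* := \bar T_A \bar v \in V^* \setminus \{0\}$ and pairing the annihilator relation with a lift $v \in V$ of $\bar v$, the identity $\bar T_B \bar v = \lambda\, \bar T_A \bar v$ gives
\[
v^*\bigl(\ell_{1i} + \lambda\,\ell_{2i}\bigr) \;=\; 0 \qquad \text{for every } i,
\]
so at the point $p := [v^*] \in \p^n$ every column of $\phi$ lies on the single line $k \cdot (-\lambda, 1) \subset k^2$, contradicting surjectivity of $\phi$ at $p$. The main obstacle in the proof is the possibility that $T_A$ or $T_B$ is singular, so that the eigenvalue argument cannot be applied to $T_A^{-1}T_B$ directly on $V$; the passage to the quotient $V/N$ together with the ``common kernel, common image'' reduction above is precisely what rescues the construction in that generality.
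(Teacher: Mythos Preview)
Your proof is correct and takes a genuinely different route from the paper's argument. The paper proceeds cohomologically: it lets $K = \Ker \phi$, observes that surjectivity of $\tH^0(\phi(1))$ is equivalent to $\tH^1(K(1)) = 0$, and proves the latter by induction on $n$. The base case $n=1$ is a direct splitting of $K$ into line bundles on $\pj$; the inductive step uses that $\tH^1(K_H(1)) = 0$ for every hyperplane $H$ (the induction hypothesis applied to $\phi \vb H$) together with the bound $\h^1(K) \leq n$ coming from the rank of $\tH^0(\phi)$, and then invokes Lemma~\ref{L:h1evbh=0} (which rests on Hartshorne's Bilinear Map Lemma). Your argument, by contrast, is pure linear algebra over the algebraically closed field $k$: you translate a hypothetical nonzero annihilator of the image into a pair of symmetric maps $T_A,T_B \colon V \to V^*$ subject to the relations $T_A\ell_{1i} + T_B\ell_{2i} = 0$, reduce to the common-kernel quotient $V/N$, and use an eigenvector of $\bar T_A^{-1}\bar T_B$ to produce an explicit point where $\phi$ drops rank. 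Your approach avoids all cohomology and induction and makes the geometric obstruction completely explicit; the paper's approach is more modular (it reuses Lemma~\ref{L:h1evbh=0}) and would adapt more readily if the target had more than two summands, whereas your pencil-of-quadrics argument is specific to rank $2$.
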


\begin{proof} 
Since $\phi$ is an epimorphism, $\tH^0(\phi) \colon \tH^0(m\sco_{\p^n}) \ra 
\tH^0(2\sco_{\p^n}(1))$ must have rank $\geq n + 2$. The kernel $K$ of $\phi$ 
is a vector bundle on $\p^n$ with $\h^1(K) \leq 2(n + 1) - (n + 2) = n$. 
We shall prove, by induction on $n$, that $\tH^1(K(1)) = 0$. 

The case $n = 1$ is clear because, in that case, $K \simeq (m - 4)\sco_\pj 
\oplus 2\sco_\pj(-1)$ or $K \simeq (m - 3)\sco_\pj \oplus \sco_\pj(-2)$. 

Assuming that our assertion is true on $\p^{n-1}$ let us prove it on $\p^n$. 
By the induction hypothesis, $\tH^1(K_H(1)) = 0$, for every hyperplane 
$H \subset \p^n$. As we saw at the beginning of the proof, $\h^1(K) \leq n$. 
Applying Lemma~\ref{L:h1evbh=0} to $E := K(1)$ one gets that $\tH^1(K(1)) = 
0$.    
\end{proof} 

The next lemma is a weak variant of some results of Martin-Deschamps and 
Perrin \cite{mdp}. This variant suffices for our purposes. We include, for 
the reader's convenience, an argument that we have extracted from (several 
places of) the paper of Martin-Deschamps and Perrin.  

\begin{lemma}\label{L:dependencyloci} 
Let $E$ be a vector bundle on $\piii$, of rank $r \geq 2$, such that the 
evaluation morphism ${\fam0 ev}_E \colon {\fam0 H}^0(E) \otimes_k \sco_\piii 
\ra E$ has rank $\geq r - 1$ at every point of $\piii$. Let $\Delta$ be the 
degeneracy scheme of ${\fam0 ev}_E$. If $\dim \Delta \leq 1$ and if there are 
only finitely many poins $x \in \Delta$ for which $\sci_{\Delta , x} \subseteq 
\fm_x^2$ (where $\fm_x$ is the maximal ideal of the local ring 
$\sco_{\piii , x}$) then the dependency scheme of $r - 1$ general global 
sections of $E$ is a nonsingular (but not necessarily connected) curve.   
\end{lemma}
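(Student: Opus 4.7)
The plan is to study the universal dependency locus $\mathcal{D} \subset G \times \piii$ over the Grassmannian $G = G(r-1, V)$ with $V := \tH^0(E)$, and to conclude by applying generic smoothness to the projection $\mathcal{D} \to G$. Let $\mathcal{W} \hookrightarrow V \otimes \sco_G$ be the tautological subbundle, and on $G \times \piii$ set $\Phi$ to be the composition $\mathcal{W} \boxtimes \sco_\piii \hookrightarrow V \otimes \sco_{G \times \piii} \to \text{pr}_\piii^* E$ (pullback of $\text{ev}_E$). Define $\mathcal{D}$ as the degeneracy locus of $\Phi$ (rank $\leq r - 2$), cut out locally by the $(r-1) \times (r-1)$ minors of an $r \times (r-1)$ matrix representing $\Phi$. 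A general $[W] \in G$ corresponds to $r - 1$ general sections and $D_W$ is the fibre of $\mathcal{D} \to G$ at $[W]$; it therefore suffices to show that $\mathcal{D}$ has pure dimension $\dim G + 1$ and that its smooth locus dominates $G$.

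I would first compute the dimension by fibering $\mathcal{D}$ over $\piii$. For $x \notin \Delta$, $\Ker \text{ev}_{E,x}$ has codimension $r$ in $V$, so $\mathcal{D}_x \subset G$ is the Schubert subvariety of $W$ with $W \cap \Ker \text{ev}_{E,x} \neq 0$, of codimension $2$; for $x \in \Delta$, where $\text{ev}_{E,x}$ has corank $1$, $\Ker \text{ev}_{E,x}$ has codimension $r - 1$, so $\mathcal{D}_x$ has codimension $1$. Using $\dim \Delta \leq 1$, the part of $\mathcal{D}$ lying over $\Delta$ has dimension at most $\dim G$, while the part over $\piii \setminus \Delta$ has dimension $\dim G + 1$, giving $\dim \mathcal{D} = \dim G + 1$ and a general fibre of dimension $1$. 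For smoothness of $\mathcal{D}$ at a point $(W_0, x_0)$ with $x_0 \notin \Delta$, the surjectivity of $\text{ev}_{E, x_0}$ lets first-order deformations of $[W_0]$ in $G$ realise arbitrary first-order perturbations of the matrix of $\Phi_{W_0, x_0}$, so the classical Bertini for determinantal loci gives smoothness of the expected codimension $2$.

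The hard part will be smoothness of $\mathcal{D}$ at a point $(W_0, x_0)$ with $x_0 \in \Delta$. The finitely many ``bad'' points of $\Delta$ where $\sci_{\Delta, x} \subseteq \fm_x^2$ are avoided by the generic $D_W$ via a codimension count in $G$, so I may assume $\sci_{\Delta, x_0} \not\subseteq \fm_{x_0}^2$. Near $x_0$, I would choose local coordinates, trivialise $E$, and apply row operations on sections of $E$ to put the matrix $M$ of $\text{ev}_E$ into the normal form $\bigl(\begin{smallmatrix} I_{r-1} & B \\ 0 & D \end{smallmatrix}\bigr)$, where $B$ and $D$ vanish at $x_0$ and the $N - r + 1$ entries of the row $D$ generate $\sci_\Delta$ locally; the hypothesis then forces at least one entry $D_i$ of $D$ to have nonzero differential at $x_0$. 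A direct tangent-space calculation in this normal form, for $W_0$ generic inside the Schubert locus $\mathcal{D}_{x_0} \subset G$, shows that the two nontrivial $(r-1) \times (r-1)$ minors of $\Phi_{W_0}$ near $x_0$ have linearly independent differentials at $x_0$ --- one inherited from the linear part of $D_i$, the other produced by a generic tangent direction in $T_{[W_0]}G$ transverse to $\mathcal{D}_{x_0}$. This yields $\mathcal{D}$ smooth of codimension $2$ at $(W_0, x_0)$, and generic smoothness of $\mathcal{D} \to G$ finishes the proof. The delicate point is precisely this local computation, which is where the hypothesis $\sci_{\Delta, x_0} \not\subseteq \fm_{x_0}^2$ enters essentially, translating into the missing linearly independent differential needed for transversality.
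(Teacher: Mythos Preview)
Your approach is correct in outline and reaches the same conclusion as the paper via generic smoothness, but it is organised differently. The paper first reduces to rank $2$: it shows (by an easy inductive incidence argument) that $r-2$ general sections of $E$ are dependent only along a finite set $\Gamma$ of reduced points, yielding an exact sequence $0 \to (r-2)\sco_\piii \to E \to \scf \to 0$ with $\scf$ rank $2$ reflexive. It then studies the incidence $W = \{([s],x) : s(x)=0\} \subset \p(\tH^0(\scf)) \times (\piii \setminus \Gamma)$, which is your $\mathcal{D}$ in the special case $r=2$, and performs the local Jacobian computation there; the points of $\Gamma$ are handled separately at the end by a direct argument about zero loci of sections of reflexive sheaves. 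You instead work directly on the Grassmannian $G(r-1,V)$, which avoids the reduction step and the separate treatment of $\Gamma$, at the price of a more involved local calculation.

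Two points in your write-up need tightening. First, $\mathcal{D}$ is \emph{not} smooth at every $(W_0,x_0)$ with $x_0 \notin \Delta$: it is singular precisely along the deeper stratum where $\dim(W_0 \cap \Ker \text{ev}_{E,x_0}) \geq 2$. This locus has high codimension in $G$ (codimension $6$ in each fibre), so its image does not dominate $G$ and the argument survives, but ``arbitrary first-order perturbations of the matrix'' overstates what you actually get. Second, and more importantly, your phrase ``the two nontrivial $(r-1)\times(r-1)$ minors of $\Phi_{W_0}$ have linearly independent differentials at $x_0$'' is not what is true: for a generic $W_0$ in the Schubert cell $\mathcal{D}_{x_0}$ the locus $D_{W_0} \subset \piii$ can very well be singular at $x_0$ (take all $dB_{ij}(x_0)=0$ in your normal form). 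What is true, and what you evidently intend since you invoke a tangent direction in $T_{[W_0]}G$, is that the minors of $\Phi$ \emph{as functions on $G \times \piii$} have independent differentials at $(W_0,x_0)$: the $\piii$-directions supply the $e_r$-component via $\sum a_j\, dD_j(x_0) \neq 0$ (for generic $w = \sum a_j v_{r-1+j}$, using the hypothesis on $\sci_\Delta$), while the $G$-directions, because $\text{ev}_{E,x_0}$ has rank $r-1 > r-2$, surject onto the complementary line $\text{Im}(\text{ev}_{E,x_0})/\text{Im}(\Phi_{W_0,x_0})$ in $\Cok\Phi_{(W_0,x_0)}$. Once this is said precisely, your argument goes through.
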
 

\begin{proof} 
Let us, firstly, recall a definition$\, :$ let $\phi \colon F \ra E$ be a 
morphism of vector bundles on $\piii$ (or on any scheme) and $i \geq 0$  
an integer. One denotes by $D_i(\phi)$ the zero scheme of 
$\overset{i+1}{\bigwedge} \phi$, viewed as a global section of 
$\sch om(\overset{i+1}{\bigwedge} F , \overset{i+1}{\bigwedge} E) \simeq 
(\overset{i+1}{\bigwedge} F)^\vee \otimes 
\overset{i+1}{\bigwedge} E$. If $\text{rk}\, F \geq \text{rk}\, E =: r$ then 
the \emph{degeneracy scheme} of $\phi$ is, by definition, $D_{r-1}(\phi)$. 
Let $\phi^\prime \colon F^\prim \ra E^\prim$ be another morphism of vector 
bundles. If $\Cok \phi^\prime$ is isomorphic, locally on $\piii$, to 
$\Cok \phi$ then $D_{r^\prim - i}(\phi^\prime) = D_{r-i}(\phi)$, $\forall \, i 
\geq 1$, $r^\prim$ being the rank of $E^\prim$$\, ;$ this follows from the basic 
property of Fitting ideals (see, for example, Eisenbud \cite[\S~20.2]{eis}). 

Next, if $E$ is a rank $r$ vector bundle on $\piii$ let $\p(\tH^0(E))$ denote 
the (classical) projective space of 1-dimensional $k$-vector subspaces of 
$\tH^0(E)$. Assuming that $\h^0(E) = N + 1$, one has $\p(\tH^0(E)) \simeq 
\p^N$. Consider the following closed subscheme of $\p(\tH^0(E)) \times 
\piii$$\, :$ 
\[
Z := \{ ([s] , x) \vb s(x) = 0 \}\, , 
\]    
and the canonical projections $p \colon Z \ra \p(\tH^0(E))$ and $q \colon Z 
\ra \piii$. The fiber $p^{-1}([s])$ can be identifed with the zero scheme of 
the global section $s$ of $E$, while $q$ turns $Z$ into a $\p^{N-i}$-bundle 
over (the scheme) $D_i(\text{ev}_E) \setminus D_{i-1}(\text{ev}_E)$. In 
particular, the singular locus of $Z$ is contained in 
$q^{-1}(D_{r-1}(\text{ev}_E))$. We also notice that if one has an exact 
sequence$\, :$ 
\[
0 \lra m\sco_\piii \lra E \lra E^\prim \lra 0\, , 
\]
with $E^\prim$ a vector bundle of rank $r - m$ then $D_i(\text{ev}_{E^\prim}) = 
D_{i+m}(\text{ev}_E)$, $\forall \, i \geq 0$, because $\Cok \text{ev}_{E^\prim} 
\simeq \Cok \text{ev}_E$. 

It follows, now, easily, by decreasing induction on $r \geq 3$, that if 
$\text{ev}_E$ has rank $\geq r - 2$ at every point of $\piii$, if 
$\dim D_{r-1}(\text{ev}_E) \leq 1$ and if $\dim D_{r-2}(\text{ev}_E) \leq 0$ 
then the dependency scheme $\Gamma$ of $r - 2$ general global sections of 
$E$ consists of finitely many simple points. In this case, one has an exact 
sequence$\, :$ 
\[
0 \lra (r - 2)\sco_\piii \lra E \lra \scf \lra 0\, , 
\]  
where $\scf$ is a rank 2 reflexive sheaf with 
$\sce xt^1_{\sco_\piii}(\scf , \sco_\piii) \simeq \sco_\Gamma$. Let $\sigma$ denote 
the restriction of the evaluation morphism $\tH^0(\scf) \otimes_k \sco_\piii 
\ra \scf$ of $\scf$ to $\piii \setminus \Gamma$, let $W$ be the closed 
subscheme of $\p(\tH^0(\scf)) \times (\piii \setminus \Gamma)$ analogous to 
$Z$ above, and let $\pi \colon W \ra \p(\tH^0(\scf))$ and $\rho \colon W \ra 
\piii \setminus \Gamma$ be the canonical projections. Notice that 
$\tH^0(\scf)$ has dimension $N^\prime + 1$, where $N^\prime = N - r + 2$.  

Under the hypothesis of the lemma, $D_1(\sigma) = \Delta \setminus \Gamma$ and 
$D_0(\sigma) = \emptyset$. $W$ is given, locally, by two equations. Since it 
is a $\p^{N^\prime - 2}$-bundle over $\piii \setminus (\Delta \cup \Gamma)$ and a 
$\p^{N^\prime - 1}$-bundle over $\Delta \setminus \Gamma$, one deduces that $W$ is 
irreducible of dimension $N^\prime + 1$ and $\text{Sing}\, W \subseteq 
\rho^{-1}(\Delta \setminus \Gamma)$. 

$\Delta \setminus \Gamma$ can be covered with open subsets $U$ of $\piii 
\setminus \Gamma$ with the property that $\scf \vb U$ is trivial and there 
exists a global section $s_0$ of $\scf$ vanishing at no point of $U$. Extend 
$s_0 \vb U$ to a local frame $(s_0 \vb U\, ,\, t)$ of $\scf \vb U$ and $s_0$ to 
a $k$-basis  $s_0 , \ldots , s_{N^\prime}$ of $\tH^0(\scf)$. Then$\, :$ 
\[
s_i \vb U = f_i(s_0 \vb U) + g_it\, ,\  \text{with}\  f_i,\, g_i \in 
\sco_\piii(U)\, ,\  i = 1, \ldots , N^\prime\, . 
\]
One can assume that $U$ is isomorphic to an open subset of the affine space 
$\mathbb{A}^3$ hence $f_i$ and $g_i$ are functions in three variables 
$x_1,\, x_2,\, x_3$. $W \cap (\p(\tH^0(\scf)) \times U)$ is given by the 
equations$\, :$ 
\[
\lambda_0 + {\textstyle \sum}_{i = 1}^{N^\prime} \lambda_if_i(x) = 0\, ,\  
{\textstyle \sum}_{i = 1}^{N^\prime} \lambda_ig_i(x) = 0\, , 
\]
$\lambda_0, \ldots , \lambda_{N^\prime}$ being homogeneous coordinates on 
$\p(\tH^0(\scf)) \simeq \p^{N^\prime}$. The Jacobian matrix of this system of 
equations is$\, :$ 
\[ 
\begin{pmatrix} 
1 & f_1(x) & \cdots & f_{N^\prime}(x) & 
{\textstyle \sum}\lambda_i(\partial f_i/\partial x_1)(x) & 
{\textstyle \sum}\lambda_i(\partial f_i/\partial x_2)(x) & 
{\textstyle \sum}\lambda_i(\partial f_i/\partial x_3)(x)\\ 
0 & g_1(x) & \cdots & g_{N^\prime}(x) & 
{\textstyle \sum}\lambda_i(\partial g_i/\partial x_1)(x) & 
{\textstyle \sum}\lambda_i(\partial g_i/\partial x_2)(x) & 
{\textstyle \sum}\lambda_i(\partial g_i/\partial x_3)(x)
\end{pmatrix} . 
\]
One deduces that if $x \in U$ then $\rho^{-1}(x) \subseteq \text{Sing}\, W$ 
if and only if$\, :$ 
\[
g_i(x) = 0\, ,\, (\partial g_i/\partial x_1)(x) = 0 \, ,\, 
(\partial g_i/\partial x_2)(x) = 0 \, ,\, 
(\partial g_i/\partial x_3)(x) = 0 \, ,\  i = 1 , \ldots , N^\prime \, ,  
\]
and this is equivalent to $(g_i)_x \in \fm_x^2$, $i = 1 , \ldots , N^\prime$. 
Since the ideal $\sci_{\Delta , x}$ of $\sco_{\piii , x}$ is generated by 
$(g_i)_x$, $i = 1 , \ldots , N^\prime$, it follows that there are only finitely 
many points $x$ of $\Delta \setminus \Gamma$ for which the fiber 
$\rho^{-1}(x)$ is entirely contained in $\text{Sing}\, W$. This implies that 
$\dim \text{Sing}\, W \leq N^{\prime} - 1$ hence $\pi(\text{Sing}\, W)$ is 
not dense in $\p(\tH^0(\scf))$. Applying the Theorem of generic smoothness, 
one gets that the zero scheme of a general global section $s$ of $\scf$ is 
a curve whose singular locus is contained in $\Gamma$. (The above argument 
appears in the proof of \cite[IV,~Prop.~3.1]{mdp}.) 

On the other hand, if $s \in \tH^0(\scf)$ and $x \in \Gamma$ then the zero 
scheme of $s$ contains $x$ and it is, locally at $x$, a nonsingular curve if 
and only if $s(x) \neq 0$ in $\scf(x) := \scf_x/\fm_x\scf_x$ (see, for 
example, the proof of \cite[Prop.~3]{bc}). Since the evaluation map 
$\tH^0(\scf) \ra \scf(x)$ is non-zero (it has, in fact, rank $\geq 2$) the 
lemma is proven.    
\end{proof}

\begin{remark}\label{R:dependencyloci} 
(i) The hypothesis of Lemma~\ref{L:dependencyloci} is verified if 
there exists a closed subscheme $\Delta$ of $\piii$, of dimension 
$\leq 1$, containing only finitely many points $x$ for which 
$\sci_{\Delta , x} \subseteq \fm_x^2$, such that the cokernel of the evaluation 
morphism of $E$ is an invertible $\sco_\Delta$-module. 

(ii) $\sci_{\Delta , x}$ is not contained in $\fm_x^2$ if and only if there 
exists an open neighbourhood $U$ of $x$ in $\piii$ such that $U \cap \Delta$ 
is contained in a nonsingular surface.   
\end{remark}

\section{Lines and Points in $\piii$}\label{A:auxiliary}

\begin{lemma}\label{L:3lines+4points} 
Let $Y$ be the union of three mutually disjoint lines $L_1,\, L_2,\, L_3$ in 
$\piii$ and let $P_0 , \ldots , P_3$ be four points in $\piii$ such that 
$W := \{P_0 , \ldots , P_3\}$ is not contained in a plane. We assume that 
none of the four points belongs to the quadric surface $Q \subset \piii$ 
containing $Y$ and that ${\fam0 H}^0(\sci_{(Y \setminus L_l) \cup W}(2)) 
= 0$, $l = 1,\, 2,\, 3$. Then the homogeneous ideal of $Y \cup W$ is 
generated by cubic forms.  
\end{lemma}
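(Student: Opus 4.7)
The plan is to establish generation by cubic forms through cohomology vanishings together with a Castelnuovo--Mumford-type argument. Since $Y \subset Q$ and $W \cap Q = \emptyset$, $Y$ and $W$ are disjoint, so there is a short exact sequence $0 \to \sci_{Y \cup W} \to \sci_Y \to \sco_W \to 0$. From the standard sequence $0 \to \sco_\piii(-2) \to \sci_Y \to \sco_Q(-3,0) \to 0$ (where $f \in S_2$ is an equation of $Q$), one reads off that the homogeneous ideal of $Y$ is generated by $f$ together with four cubics restricting to a basis of $\tH^0(\sco_Q(0,3))$, that $\tH^1(\sci_Y(d))=0$ for $d \geq 3$, and that $\tH^0(\sci_Y(3))$ is $8$-dimensional. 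Because $W$ is not coplanar and $f(P_j)\neq 0$ for every $P_j \in W$, the $4$-dimensional subspace $f \cdot S_{d-2} \subset \tH^0(\sci_Y(d))$ already surjects onto $\tH^0(\sco_W) = k^4$ for every $d \geq 3$; feeding this into the long exact sequence of the above short one yields $\tH^1(\sci_{Y \cup W}(d))=0$ for $d \geq 3$. The auxiliary vanishings $\tH^2(\sci_{Y\cup W}(d))=0$ for $d \geq -1$ and $\tH^3(\sci_{Y\cup W}(d))=0$ for $d \geq -3$ are immediate from the corresponding vanishings on $\sco_\piii$ and on $\sco_{Y \cup W}$.

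In particular $\sci_{Y \cup W}$ is $4$-regular, so the ideal $\tH^0_\ast(\sci_{Y\cup W})$ is certainly generated in degrees $\leq 4$. To upgrade this to generation by cubics, it suffices to show that the multiplication map $\mu\colon \tH^0(\sci_{Y\cup W}(3)) \otimes S_1 \to \tH^0(\sci_{Y\cup W}(4))$ is surjective. A Riemann--Roch calculation, using the vanishings just established, gives $\dim \tH^0(\sci_{Y\cup W}(3)) = 4$ and $\dim \tH^0(\sci_{Y\cup W}(4)) = 16$, so $\mu$ is a linear map between two $16$-dimensional spaces; its surjectivity is equivalent to its injectivity, i.e., to the absence of non-trivial linear syzygies among the four cubic generators. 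A useful explicit parametrization is available: fixing a complement $C \subset \tH^0(\sci_Y(3))$ of $f \cdot S_1$ (isomorphic via restriction to $\tH^0(\sco_Q(0,3))$), a basis of $\tH^0(\sci_{Y\cup W}(3))$ is given by $g_h := \ell(h) f + h$ for $h \in C$, where $\ell(h) \in S_1$ is uniquely determined by $\ell(h)(P_j) = -h(P_j)/f(P_j)$ (using non-coplanarity of $W$).

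The main obstacle is precisely this injectivity, and it is here that the hypotheses $\tH^0(\sci_{(Y \setminus L_l) \cup W}(2)) = 0$ ($l=1,2,3$) enter decisively. A putative non-trivial syzygy $\sum \lambda_i g_{h_i} = 0$ in $S_4$ restricts to $Q$ to a non-zero element of the $6$-dimensional kernel of the multiplication $\tH^0(\sco_Q(1,1)) \otimes \tH^0(\sco_Q(0,3)) \to \tH^0(\sco_Q(1,4))$ (whose structure, via $Q \simeq \pj \times \pj$, is that of $\tH^0(\sco_\pj(1))$ tensored with the $3$-dimensional Koszul syzygy module on the second factor). The plan is to show that any such non-zero element, combined with the matching condition in $S_2$ forced by the lift of the $Q$-syzygy to $\piii$ (the discrepancy $p + \sum \lambda_i \ell(h_i) \in S_2$ automatically vanishes on $W$, giving a linear map between two $6$-dimensional spaces whose injectivity is the goal), produces a non-zero quadric in $\tH^0(\sci_{(L_i \cup L_j) \cup W}(2))$ for some pair $\{i,j\}$, contradicting the hypothesis. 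Granting this, $\mu$ is an isomorphism and the homogeneous ideal of $Y \cup W$ is generated by cubic forms.
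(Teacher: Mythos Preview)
Your reduction to the surjectivity of the multiplication map $\mu\colon \tH^0(\sci_{Y\cup W}(3))\otimes S_1 \to \tH^0(\sci_{Y\cup W}(4))$ is correct, and the dimension count showing that source and target are both $16$-dimensional is a nice observation. Your further reformulation --- that $\Ker\mu$ is identified with the kernel of a linear map $\Phi$ from the $6$-dimensional Koszul syzygy space on $Q$ to the $6$-dimensional space $\tH^0(\sci_W(2))$, via $\sigma\mapsto p+\sum\lambda_i\ell(h_i)$ --- is also correct.

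The gap is that you do not actually prove the injectivity of $\Phi$. You write ``the plan is to show'' and ``granting this'', but the mechanism you propose is not spelled out and, as stated, is unclear: a non-zero $\sigma\in\Ker\Phi$ has discrepancy $q=0$, so there is no quadric at hand to place in $\tH^0(\sci_{(L_i\cup L_j)\cup W}(2))$; conversely, for a $\sigma$ with $\Phi(\sigma)\neq 0$ you would need to show that $\Phi(\sigma)$ vanishes on some pair $L_i\cup L_j$, and no reason for this is given. This is precisely the point at which the hypothesis $\tH^0(\sci_{(Y\setminus L_l)\cup W}(2))=0$ must do real work, and your argument does not connect the syzygy structure on $Q$ to that hypothesis.

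For comparison, the paper's proof avoids analyzing $\Ker\mu$ directly. It uses the snake map for the diagram with rows $0\to\tH^0(\sci_{Y\cup W}(d))\to\tH^0(\sci_Y(d))\to\tH^0(\sco_W(d))\to 0$ ($d=3,4$) and reduces to showing that $\Ker\mu_Y\to\Ker\mu_W$ is surjective. It then writes down an explicit basis $e_i\otimes h_j$ ($i\neq j$) of $\Ker\mu_W$ and, for each $i$ and each $l$, exhibits the element $h_{il}^\prime q_{il}\otimes h_{il}-h_{il}q_{il}\otimes h_{il}^\prime\in\Ker\mu_Y$, where $q_{il}$ is the \emph{unique} quadric through $(Y\setminus L_l)\cup(W\setminus\{P_i\})$; this uniqueness is exactly the content of the hypothesis. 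A short linear-independence argument (using $P_i\notin Q$) then shows these elements hit all of $\Ker\mu_W$. To complete your approach you would need a comparably concrete link between the Koszul syzygies and the quadrics $q_{il}$.
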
 

\begin{proof} 
Since $\tH^0(\sci_{(Y \setminus L_l) \cup W}(2)) = 0$ it follows that 
$\tH^0(\sci_{Y \setminus L_l}(2)) \izo \tH^0(\sco_W(2))$, $l = 1,\, 2,\, 3$. 
One deduces easily that the map $\tH^0(\sci_Y(3)) \ra \tH^0(\sco_W(3))$ 
is surjective hence $\tH^1(\sci_{Y \cup W}(3)) = 0$ and 
$\h^0(\sci_{Y \cup W}(3)) = 4$. Consequently, $\sci_{Y \cup W}$ is 
4-regular. It remains to show that the multiplication map  
$\mu \colon \tH^0(\sci_{Y \cup W}(3)) \otimes S_1 \ra \tH^0(\sci_{Y \cup W}(4))$  
is surjective. Using the commutative diagram$\, :$ 
\[
\SelectTips{cm}{12}\xymatrix{
0\ar[r] & \tH^0(\sci_{Y \cup W}(3)) \otimes S_1\ar[r]\ar[d]^{\mu} &  
\tH^0(\sci_Y(3)) \otimes S_1\ar[r]\ar[d]^{\mu_Y} &  \tH^0(\sco_W(3)) \otimes 
S_1\ar[r]\ar[d]^{\mu_W} & 0\\ 
0\ar[r] & \tH^0(\sci_{Y \cup W}(4))\ar[r] & \tH^0(\sci_Y(4))\ar[r] & 
\tH^0(\sco_W(4))\ar[r] & 0}
\] 
one sees that it suffices to show that the map $\Ker \mu_Y \ra \Ker \mu_W$ 
induced by this diagram is surjective. 

For $0 \leq i \leq 3$, let $h_i = 0$ be an equation of the plane containing 
$W \setminus \{P_i\}$ and let $e_i$ be the element of $\tH^0(\sco_W)$ 
defined by $e_i(P_j) = \delta_{ij}$, $j = 0, \ldots , 3$. Since 
$\mu_W(e_i \otimes h_j) = h_j(P_i)e_i$ it follows that $\Ker \mu_W$ has 
a $k$-basis consisting of the elements $e_i \otimes h_j$, with $0 \leq i \leq 
3$, $0 \leq j \leq 3$ and $i \neq j$. 

Take an $i \in \{0, \ldots , 3\}$. For $1 \leq l \leq 3$, let $h_{il} = 0$ 
be an equation of the plane containing $L_l \cup \{P_i\}$. We assert that 
$h_{i1},\, h_{i2},\, h_{i3}$ are linearly independent. \emph{Indeed}, if they 
are linearly dependent then they vanish on a line $L \subset \piii$ 
containing $P_i$. $L$ is, then, a 3-secant of $Y = L_1 \cup L_2 \cup L_3$ 
hence $L$ is contained in the quadric surface $Q \subset \piii$ containing 
$Y$ and this \emph{contradicts} the fact that $P_i \notin Q$. It, thus, 
remains that $h_{i1},\, h_{i2},\, h_{i3}$ are linearly independent. 

For $1 \leq l \leq 3$, let $q_{il} = 0$ be an equation of the unique quadric 
surface containing $(Y \setminus L_l) \cup (W \setminus \{P_i\})$ (recall 
that $\tH^0(\sci_{Y \setminus L_l}(2)) \izo \tH^0(\sco_W(2))$). Choose, also, 
$h_{il}^\prime \in S_1$ vanishing on $L_l$ but not at $P_i$. Then$\, :$ 
\[
h_{il}^\prime q_{il} \otimes h_{il} - h_{il}q_{il} \otimes h_{il}^\prime  
\]  
belongs to $\Ker \mu_Y$ and its image into $\Ker \mu_W$ is 
$(h_{il}^\prime q_{il})(P_i)e_i \otimes h_{il}$. Since $h_{il}^\prime$ and $q_{il}$ 
do not vanish at $P_i$, one deduces that $e_i \otimes h_{il}$ belongs to the 
image of $\Ker \mu_Y \ra \Ker \mu_W$. 

Finally, if $j \in \{0, \ldots , 3\} \setminus \{i\}$ then $h_j(P_i) = 0$. 
Since $h_{i1},\, h_{i2},\, h_{i3}$ vanish at $P_i$ and are linearly independent, 
it follows that $h_j$ is a linear combination of $h_{i1},\, h_{i2},\, h_{i3}$ 
hence $e_i \otimes h_j$ belongs to the image of $\Ker \mu_Y \ra 
\Ker \mu_W$. Since $i \in \{0, \ldots , 3\}$ and $j \in \{0, \ldots , 3\} 
\setminus \{i\}$ were arbitrary, the map $\Ker \mu_Y \ra \Ker \mu_W$ is 
surjective. 
\end{proof} 

\begin{corollary}\label{C:2lines+6points} 
Under the hypothesis of Lemma~\ref{L:3lines+4points}, choose two more points 
$P_4$ and $P_5$ on the line $L_1$ and put $W^\prim := \{P_0 , \ldots , P_5\}$. 
Then the homogeneous ideal of $L_2 \cup L_3 \cup W^\prim$ is generated by 
cubic forms.  
\end{corollary}

\begin{proof} 
One has an exact sequence$\, :$ 
\[
0 \lra \sci_{Y \cup W} \lra \sci_{L_2 \cup L_3 \cup W^\prim} \lra 
\sci_{\{P_4 , P_5\} , L_1} \lra 0 
\]
and $\sci_{\{P_4 , P_5\} , L_1} \simeq \sco_{L_1}(-2)$. Since, by the proof of 
Lemma~\ref{L:3lines+4points}, $\tH^1(\sci_{Y \cup W}(3)) = 0$ and the 
multiplication map $\tH^0(\sci_{Y \cup W}(3)) \otimes S_1 \ra 
\tH^0(\sci_{Y \cup W}(4))$ is surjective it follows that 
$\tH^1(\sci_{L_2 \cup L_3 \cup W^\prim}(3)) = 0$ and the multiplication map 
$\tH^0(\sci_{L_2 \cup L_3 \cup W^\prim}(3)) \otimes S_1 \ra 
\tH^0(\sci_{L_2 \cup L_3 \cup W^\prim}(4))$ is surjective.  
\end{proof}

\section*{Acknowledgements}



N. Manolache expresses his thanks to the 
Institute of Mathematics, Oldenburg University, especially to Udo Vetter, for 
warm hospitality during the preparation of this work.

\end{document}